\DeclareFontFamily{U}{mathx}{\hyphenchar\font45}
\DeclareFontShape{U}{mathx}{m}{n}{
      <5> <6> <7> <8> <9> <10>
      <10.95> <12> <14.4> <17.28> <20.74> <24.88>
      mathx10
      }{}
\DeclareSymbolFont{mathx}{U}{mathx}{m}{n}
\DeclareMathAccent{\widecheck}    {0}{mathx}{"71}
\numberwithin{equation}{section}
\theoremstyle{plain}
\newtheorem{definition}{Definition}[section]
\newtheorem{theorem}{Theorem}[section]
\newtheorem{lemma}{Lemma}[section]
\newtheorem{remark}{Remark} [section]
\begin{document}
\title[Lioville theorem for high order degenerate elliptic equations]{A Liouville theorem for high order degenerate elliptic equations}
\author[Huang]{Genggeng Huang}
\address{Department of Mathematics\\
INS and MOE-LSC\\
Shanghai Jiao Tong University, Shanghai} \email{genggenghuang@sjtu.edu.cn}
\author[Li]{Congming Li}
\address{Department of Mathematics\\
INS and MOE-LSC\\
Shanghai Jiao Tong University, Shanghai}\address{Department of Applied Mathematics, University of Colorado at Boulder}
\email{Congming.Li@Colorado.EDU}
\date{}
\maketitle

\begin{abstract}
In this paper, we  apply the moving plane method to the following high order degenerate elliptic equation,\begin{equation*}
(-A)^p u=u^\alpha\text{ in } \mathbb R^{n+1}_+,n\geq 1,
\end{equation*}where the operator $A=y\partial_y^2+a\partial_y+\Delta_x,a\geq 1$.  We get a Liouville  theorem for subcritical case and classify the solutions for the critical case.

\par Key Words: Degenerate elliptic,  Moving plane, Divergence identity

\end{abstract}

\section{Introduction}\setcounter{section}{1} \setcounter{equation}{0}
\setcounter{theorem}{0}\setcounter{lemma}{0}
\label{intro}
This article concerns the symmetry of solutions of  degenerate elliptic equations on an unbounded domain. The first well-known work was first done by Gidas, Ni and Nirenberg \cite{GidasNiNirenberg} for the uniformly elliptic equations. In the elegant paper of \cite{GidasNiNirenberg}, one of the interesting results is on the symmetries of the non-negative solutions of \begin{equation}
\label{intro1}\Delta u + u^\alpha=0,\quad x\in \mathbb R^n, n\geq 3.
\end{equation}
They classified the positive solutions of \eqref{intro1} for $\alpha=\frac {n+2}{n-2}$ with additional decay at infinity, namely $u(x)=O(|x|^{2-n})$ by the method of moving plane. Later on, Caffarelli, Gidas and Spruck \cite{CaffarelliGidasSpruck} removed the growth assumption by introducing the Kelvin transformation and got the same results. In the case that $1\leq \alpha<\frac {n+2}{n-2}$, Gidas and Spruck \cite{GidasSpruck} showed that \eqref{intro1} admitted only trivial solution.
\par An interesting related problem is the extension of \eqref{intro1} to the degenerate elliptic case,\begin{equation}
\label{intro2}y\partial_y^2 u+\Delta_x u+a\partial_y u+u^{\alpha}=0,\quad (x,y)\in \mathbb R^n\times \mathbb R^+=\mathbb R^{n+1}_+, \quad n\geq 1,
\end{equation}here $a\geq 1$ is a constant. Such an equation arises from the isometric embedding of Alexandrov-Nirenberg surfaces when we dealt with the a priori estimates of the second fundamental forms, see \cite{HanHongHuang}. In \cite{Huang1}, the author got that for $1<\alpha<\frac{n+2a+2}{n+2a-2}$, the only nonnegative solutions for \eqref{intro2} is 0 and classified all the nonnegative solutions for $\alpha=\frac{n+2a+2}{n+2a-2}$.
\par Then, it's natural to consider the high order degenerate elliptic case.
we consider the nonnegative solutions $u\in C^{2p}(\overline{\mathbb R^{n+1}_+})$ of the following high order degenerate elliptic equations\begin{equation}
\label{intromain}
(-A)^p u=u^\alpha\text{ in } \mathbb R^{n+1}_+,n\geq 1,
\end{equation}where the operator $A=y\partial_y^2+\Delta_x+a\partial_y,1\leq p< \frac {n+2a}2,p\in\mathbb Z$ and $a\geq 1$ is a constant.
As is known, there are many high order elliptic extension results concerning \eqref{intro1}, for instance \cite{ChenLiOu, LiYY, Lin, LiuGuoZhang2006, WeiXu, Xu} and references therein. Inspired by these results, we have the following theorem of \eqref{intromain}.\begin{theorem}\label{mainthm1}
Let $0\leq u(x,y)\in C^{2p}(\overline{\mathbb R^{n+1}_+})$
satisfy the following equation,
\begin{equation}\label{intromain1}(-A)^p u=u^{\alpha},\quad \text{in}\quad \mathbb R^{n+1}_+,\quad p\in \mathbb Z^+,2p<n+2a,\end{equation}where the operator $A=y\partial_y^2+a\partial_y+\Delta_x$, $a\geq 1$ is a constant. Then \begin{itemize}
\item[(1)] for $1<\alpha<\frac{n+2a+2p}{n+2a-2p}$, $u\equiv 0$;
\item[(2)] for $\alpha=\frac{n+2a+2p}{n+2a-2p}$, $\displaystyle u_{t,x_0}(x,y)=
c_0\left(\frac{t}{t^2+4y+|x-x_0|^2}\right)^{\frac{n+2a-2p}{2}}$
\end{itemize}for some $x_0\in \mathbb R^n$ and $t>0$.
\end{theorem}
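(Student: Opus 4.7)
The plan is to apply the method of moving planes in the $x$-directions to a Kelvin-transformed integral system. The guiding geometric intuition is that, under the substitution $s = 2\sqrt{y}$, the operator $A$ becomes $\partial_s^2 + \frac{2a-1}{s}\partial_s + \Delta_x$, which is the Laplacian on $\mathbb{R}^n \times \mathbb{R}^{2a}$ acting on functions radial in the transverse $2a$-plane. This identifies an effective dimension $N := n+2a$, matching the critical exponent $\frac{N+2p}{N-2p}$ in \eqref{intromain1}, and singles out $|X - X_0|^2 := |x - x_0|^2 + 4y$ as the natural squared ``distance'' from $X = (x,y)$ to a boundary point $X_0 = (x_0, 0)$.

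First I would rewrite the $2p$-th order scalar equation as a system of $p$ second-order problems: set $u_0 = u$ and $u_k = (-A) u_{k-1}$ for $1 \le k \le p-1$, so that $-A u_{p-1} = u_0^\alpha$. A preliminary task is to prove each $u_k \ge 0$ on $\overline{\mathbb{R}^{n+1}_+}$; this should follow from a maximum principle for $-A$ on the half-space (available because $a \ge 1$ makes the boundary behavior at $y=0$ compatible with comparison) combined with decay bounds on $u$ obtained by testing against the Green's function $G$ of $-A$. Once nonnegativity is established, convert the system into integral form
\[
 u_k(X) = \int_{\mathbb{R}^{n+1}_+} G(X, X')\, u_{k+1}(X')\, dX',\qquad u_{p-1}(X) = \int_{\mathbb{R}^{n+1}_+} G(X, X')\, u_0(X')^\alpha\, dX',
\]
so that all subsequent comparisons can be carried out without differentiation.

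Next, for an arbitrary base point $x_0 \in \mathbb{R}^n$, I introduce the Kelvin-type transform in the effective geometry: with $X_0 = (x_0, 0)$,
\[
 \widetilde{u}_k(X) = |X - X_0|^{-(n+2a-2(p-k))}\, u_k\!\left(x_0 + \frac{x-x_0}{|X-X_0|^2},\ \frac{y}{|X-X_0|^4}\right),
\]
the exponents being chosen so that the integral system is preserved away from $X_0$ while $\widetilde{u}_k$ carries the critical decay $|X|^{-(n+2a-2(p-k))}$ at infinity. On the transformed system I would run the moving-plane method in each $x_i$-direction using narrow-region and decay-at-infinity maximum principles adapted to $-A$. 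In the subcritical case, the scaling mismatch between the decay of $\widetilde{u}$ and the nonlinear rate forces the planes to sweep the whole space, and positivity then yields $u \equiv 0$. In the critical case the planes stop at finite positions, giving symmetry of $u$ (and of every $u_k$) in $x$ about some point; the freedom to choose $x_0$ together with the Kelvin-based scaling symmetry generates precisely the one-parameter family $t > 0$, matching $u_{t, x_0}$.

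The main obstacle will be the analysis at the degenerate boundary $y = 0$: constructing $G$ with the correct boundary behavior induced by the Bessel-type term $a\partial_y$, verifying that the Kelvin transform maps $C^{2p}(\overline{\mathbb{R}^{n+1}_+})$ solutions of the PDE to classical solutions of the equivalent integral system away from $X_0$, and establishing the narrow-region and decay-at-infinity comparison principles for $-A$ in this weighted setting. Secondary but nontrivial issues are the positivity of the intermediate $u_k$ (handled by iterating a decay-plus-maximum-principle argument through the system) and, in the critical case, converting the $x$-symmetry plus Kelvin-based symmetry into the explicit bubble form, most cleanly via an ODE reduction in the similarity variable $\tau = (|x-x_0|^2 + 4y)/t$ after substituting the symmetric ansatz into \eqref{intromain1}.
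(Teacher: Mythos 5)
Your blueprint differs from the paper in two structural ways and has one genuine gap.

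\textbf{Route difference.} You propose converting $(-A)^p u = u^\alpha$ into a coupled integral system $u_k = G * u_{k+1}$ and running moving planes on the integral form (Chen--Li--Ou style). The paper stays entirely at the PDE level: it performs the substitution $x_{n+1}=2\sqrt{y}$ to get the operator $\widetilde\Delta_{n+1,a}=\Delta_{n+1}+\frac{2a-1}{x_{n+1}}\partial_{n+1}$, takes the even extension across $x_{n+1}=0$, Kelvin-transforms $u$ alone (not each $u_k$ separately), defines $u^*_i = (-\widetilde\Delta_{n+1,a})^i u^*$, and moves planes using degenerate-operator maximum principles (Lemmas \ref{prelem2}, \ref{prelem3}, \ref{lem005}). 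Your per-component Kelvin transform $\widetilde u_k = |X-X_0|^{-(n+2a-2(p-k))}u_k(\cdot)$ does not transform the differential system cleanly --- only the power $n+2a-2p$ makes $(-\widetilde\Delta_{n+1,a})^p$ conformally covariant, and intermediate iterates pick up lower-order terms (this is exactly the content of Lemma \ref{prelem4}, where $u^*_i$ involves auxiliary smooth functions $f_i$, not Kelvin transforms of $u_i$). On the integral side this might still go through because the Green-function kernel is conformally covariant, but you would then have to actually construct and control that kernel, which is itself nontrivial here.

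\textbf{Gap 1: positivity of the intermediate iterates.} You assert that $u_k\ge 0$ ``should follow from a maximum principle for $-A$ on the half-space combined with decay bounds obtained by testing against the Green's function $G$ of $-A$.'' This is the hardest step of the whole argument, and there is no such easy route: no decay of $u$ (let alone of $u_k$) is assumed, so you do not have a Green-function representation, and the maximum principle alone gives nothing on the unbounded domain. The paper's entire Section 3 (Theorem \ref{mainthm2}) is devoted to proving $(-\widetilde\Delta_{n+1,a})^i u > 0$ by a contradiction/iteration argument in the spirit of Wei--Xu, the new ingredient being the use of the \emph{weighted} spherical average
\[
\bar u_w(r)=\frac{1}{r^{n+2a-1}}\int_{\partial B_r}|x_{n+1}|^{2a-1}\,u\,dS
\]
instead of the plain average, precisely because the first-order term $\frac{2a-1}{x_{n+1}}\partial_{n+1}$ breaks the usual divergence structure. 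Your proposal contains no substitute for this argument, so the positivity of $u_1,\dots,u_{p-1}$ remains unproved.

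\textbf{Gap 2: the singularity at the Kelvin center.} After the Kelvin transform the equation becomes $(-\widetilde\Delta_{n+1,a})^p u^*=|x|^{-\tau}(u^*)^\alpha$ with a possible singularity at $0$, and you must still show $u^*_i>0$ on $\mathbb R^{n+1}\setminus\{0\}$ and control the boundary term as $r\to 0$ in order to start and sustain the moving planes. In the classical polyharmonic case (Wei--Xu) one shows $|x|^{-\tau}(u^*)^\alpha\in L^1$ and invokes the maximum principle for weak superharmonic distributions. Here one only gets $|x_{n+1}|^{2a-1}|x|^{-\tau}(u^*)^\alpha\in L^1(B_1)$ (Lemma \ref{lem503}), and the corresponding weighted maximum principle is not available. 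The paper overcomes this with a purpose-built divergence identity on the punctured ball (Theorem \ref{mainthm3}), which supplies the needed average information $\beta_i=0$ and hence Lemma \ref{superhar}. Your proposal does not address the singularity at all; this is the second essential missing ingredient.

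The remainder of your outline (moving planes in the $x_i$-directions, using an auxiliary rotation in $(x_{n+1},x_{n+2})$ to capture the degenerate direction, varying the Kelvin center $x_0$ to force radial symmetry, and reducing the critical case to a one-parameter bubble) does match the spirit of the paper's Section 5, so the endgame is sound once the two gaps above are filled.
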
We will prove Theorem \ref{mainthm1} by the method of moving plane.
Noting the classification in Theorem \ref{mainthm1}, we think $x,y$ play different scales in the equation \eqref{intromain1}. So in fact, we always take the transformation $x_{n+1}=2\sqrt y$ to make \eqref{intromain1} easier to  be dealt with. After the transformation $x_{n+1}=2\sqrt y$, \eqref{intromain1} changes to\begin{equation}
\label{intromain2}(-\widetilde\Delta_{n+1,a})^p u=u^{\alpha},\quad \text{in}\quad \mathbb R^{n+1}_+,\quad p\in \mathbb Z^+,2p<n+2a,
\end{equation}where the operator $\displaystyle \widetilde\Delta_{n+1,a}=\sum_{i=1}^{n+1}\partial^2_{x_i}+\frac{2a-1}{x_{n+1}}\partial_{x_{n+1}}$.
\par In order to apply the moving plane method to the high order elliptic cases of \eqref{intro1}, one important step  is to prove that $(-\Delta)^i u>0,i=1,\cdots,p-1$. Similarly, for \eqref{intromain2}, we have \begin{theorem}
\label{mainthm2}Let $0< u(x)\in C^{2p}(\mathbb R^{n+1})$ be an even function with respect to $x_{n+1}$ and
satisfy \eqref{intromain2} in $\mathbb R^{n+1}$. Then it's valid that\begin{equation}
(-\widetilde\Delta_{n+1,a})^i u>0,\quad\text{in}\quad \mathbb R^{n+1},i=1,\cdots,p-1.
\end{equation}
\end{theorem}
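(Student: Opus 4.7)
The plan is to reduce the positivity of the iterated Laplacians to an ODE analysis on their weighted spherical averages and then argue by a sign-alternating cascade. Write $v_i:=(-\widetilde\Delta_{n+1,a})^i u$ for $i=0,\dots,p$, so that $v_0=u>0$, $v_p=u^{\alpha}>0$, and the cascade $-\widetilde\Delta_{n+1,a} v_{i-1}=v_i$ holds. Exploiting the divergence form
\[
\widetilde\Delta_{n+1,a}=x_{n+1}^{1-2a}\,\mathrm{div}\bigl(x_{n+1}^{2a-1}\nabla\,\cdot\bigr),
\]
together with the fact that on functions radial in $r=|x|$ over $\mathbb R^{n+1}$ this operator reduces to $L_r:=\partial_r^2+\tfrac{n+2a-1}{r}\partial_r$, I would extend $u$ evenly in $x_{n+1}$ to $\mathbb R^{n+1}$ and introduce the weighted spherical averages
\[
\bar v_i(r):=\frac{\int_{\partial B_r(0)} v_i\,|x_{n+1}|^{2a-1}\,d\sigma}{\int_{\partial B_r(0)}|x_{n+1}|^{2a-1}\,d\sigma}.
\]
A decomposition of $\widetilde\Delta_{n+1,a}$ into a radial and a spherical part (the spherical part being self-adjoint with respect to $|x_{n+1}|^{2a-1}\,d\omega$ on $S^{n}$) then yields $-L_r\bar v_{i-1}=\bar v_i$, while Jensen's inequality applied to $t\mapsto t^{\alpha}$ gives $\bar v_p=\overline{u^{\alpha}}\geq\bar u^{\alpha}>0$.

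Now assume, for contradiction, that there exists a maximal index $k\in\{1,\ldots,p-1\}$ with $\bar v_k(r_0)\leq 0$ at some $r_0>0$; by maximality, $\bar v_{k+1},\ldots,\bar v_{p}$ are strictly positive on $[0,\infty)$. Since $(r^{n+2a-1}\bar v_k'(r))'=-r^{n+2a-1}\bar v_{k+1}<0$ and $\bar v_k'(0)=0$, the function $\bar v_k$ is strictly decreasing and eventually $\leq -\delta<0$. Integrating the cascade $-L_r\bar v_{i-1}=\bar v_i$ downward and using this lower bound inductively one proves that, for $r$ large,
\[
(-1)^{j+1}\bar v_{k-j}(r)\geq c_j\, r^{2j},\qquad j=0,1,\ldots,k,
\]
so that $(-1)^{k+1}\bar u(r)\geq c\, r^{2k}$ for $r$ large. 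If $k$ is even this immediately contradicts $\bar u\geq 0$; if $k$ is odd and $k<p-1$, then feeding $\bar u\geq c r^{2k}$ into $\bar v_p\geq \bar u^{\alpha}\geq c r^{2k\alpha}$ and integrating $-L_r\bar v_{p-1}=\bar v_p$ twice forces $\bar v_{p-1}(r)\to -\infty$, contradicting the maximality of $k$.

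The delicate residual case is $k=p-1$ with $p$ even, where the sign-alternating cascade alone only yields $\bar u(r)\geq c\, r^{\beta_m}$ with $\beta_{m+1}=\alpha\beta_m+2p\to\infty$, which is not by itself a contradiction, and this is the step I expect to be the main obstacle. To close it I would combine this super-polynomial growth with the global identity
\[
R^{n+2a-1}\bar v_{p-1}'(R)=-\int_0^R s^{n+2a-1}\,\overline{u^{\alpha}}(s)\,ds,
\]
obtained by integrating the equation over $B_R$ against the weight $|x_{n+1}|^{2a-1}$ and integrating by parts $p-1$ times, to derive a quantitative rate of blow-down on $\bar v_{p-1}$ that is incompatible with the downward cascade that produced the growth of $\bar u$ in the first place. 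Once $\bar v_i(r)>0$ is established for all $i\in\{1,\ldots,p-1\}$, pointwise positivity of $v_i$ on $\mathbb R^{n+1}$ follows by first translating in the $x_1,\ldots,x_n$ coordinates (which preserves $\widetilde\Delta_{n+1,a}$) to obtain $v_i(x_0,0)\geq 0$ for all $x_0\in\mathbb R^n$, and then applying the strong maximum principle to the equation $-\widetilde\Delta_{n+1,a} v_i=v_{i+1}>0$ in the upper half-space $\{x_{n+1}>0\}$ with the Neumann trace at $\{x_{n+1}=0\}$ supplied by the evenness of $u$.
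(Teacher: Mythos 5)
Your overall strategy — weighted spherical averages, the radial cascade $-L_r\bar v_{i-1}=\bar v_i$, Jensen, and a growth-bootstrap — is exactly the strategy of the paper, and the first part of your cascade argument (through the cases $k$ even and $k$ odd with $k<p-1$) is sound. However, there are two genuine gaps.

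First, and most seriously, you explicitly flag the residual case $k=p-1$ with $p$ even as "the main obstacle" and leave it unresolved, proposing to combine the super-polynomial growth with a further "global identity" for $\bar v_{p-1}'$. But that global identity $R^{n+2a-1}\bar v_{p-1}'(R)=-\int_0^R s^{n+2a-1}\overline{u^\alpha}\,ds$ is precisely the relation the bootstrap already uses; no extra information is obtained from it. The key observation you are missing is that the radii $r_m$ at which the bounds $\bar u(r)\geq c_m r^{\beta_m}$ hold (with $\beta_{m+1}=\alpha\beta_m+2p$) stay uniformly bounded: each bootstrap step only requires $r\geq 2^{2p/(\alpha\beta_m+1)}r_m$, and since $\sum_m 2p/(\alpha\beta_m+1)<\infty$ the sequence $r_m$ converges to a finite limit. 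One also must track the constants $c_m$ carefully enough (the paper introduces $A=2\alpha(p-1)+n+2a+2p$ and bounds them by $c_0^{\alpha^m}/A^{b_m}$ with an explicit $b_m$) to show that at a single fixed radius $\bar r$, chosen so that $c_0\bar r/A^{2p\alpha/(\alpha-1)^2}\geq 2$, the lower bound diverges as $m\to\infty$. This closes the case outright, without any further identity.

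Second, the passage from positivity of origin-centered averages $\bar v_i(r)$ to pointwise positivity of $v_i$ is not justified. Translating in $x_1,\ldots,x_n$ only yields $v_i(x',0)\geq 0$; your proposed strong maximum principle in the unbounded upper half-space $\{x_{n+1}>0\}$ with only a Neumann trace on the boundary does not preclude $v_i$ from becoming negative far away. The paper's mechanism is different and unavoidable here: if $v_k(x_0)<0$ at a point $x_0$ with $(x_0)_{n+1}=b>0$, one lifts to $\mathbb R^{n+2}$ via $\tilde u(x',x_{n+1},x_{n+2})=u(x',\sqrt{x_{n+1}^2+x_{n+2}^2})$, which satisfies the analogous equation with $\widetilde\Delta_{n+2,a-\frac 12}$, and then translates in the first $n+1$ coordinates to move the bad point to the origin before running the spherical-average argument. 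You need some version of this lifting to reach points off the hyperplane $\{x_{n+1}=0\}$; without it the argument does not establish positivity of $v_i$ at a general point.
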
 In getting Theorem \ref{mainthm2}, we mainly follow the arguments in \cite{WeiXu}. Since the appearance of the first order derivative, we can't just use the sphere average  $\bar u(r)=\frac 1{|\partial B_r|}\int_{\partial B_r} u dS$, instead, we use weighted average
$$\bar u_w(r)=\frac 1{r^{n+2a-1}}\int_{\partial B_r} |x_{n+1}|^{2a-1}u(x) dS.$$
This is the new idea in the present paper and causes some changes in the proof.
 \par In the proof of Theorem \ref{mainthm1}, we inevitably encounter with the maximal principle for $u\in C^{2p}(B_1\backslash\{0\})$ even with respect to $x_{n+1}$ and \begin{equation}\label{intro3}
(-\widetilde\Delta_{n+1,a})^p u=|x|^{-\tau}u^{\alpha}, \quad\text{in}\quad B_1\backslash \{0\}\subset \mathbb R^{n+1},\alpha>1,\tau=(n+2a+2p)-\alpha(n+2a-2p).
\end{equation} In \cite{WeiXu}, the authors proved that $|x|^{-\tau}u^\alpha\in L^1(B_1)$. Then by the maximum principle for weak supper harmonic functions, they could show $(-\Delta )^i u\geq \inf_{\partial B_r} u$. In our case, we can only prove $|x_{n+1}|^{2a-1}|x|^{-\tau}u^\alpha \in L^1(B_1)$. And also, we don't have the corresponding maximum principle. Instead of this,  we establish a Green formula for \eqref{intro3} overcome this difficulty. This idea is originated from \cite{Li1996} for Laplacian equation generalized by \cite{ChernYang} for polyharmonic operators. We extend these results to some weighted divergence system. This is also an interesting part of our paper.
\begin{theorem}\label{mainthm3}
Let $u\in C^{2p}(\bar B_1\backslash\{0\})$ with $u(x',x_{n+1})=u(x',-x_{n+1})$ satisfy \eqref{intro3}. Moreover, we assume that $|x_{n+1}|^{2a-1}|x|^{-\tau} u^{\alpha}$ $\in L^1(B_1)$, then \begin{equation}
\label{intromain3}\int_{\partial B_1}\left[\frac{\partial v_i}{\partial r}-(2a-1)v_i\right]dS+\lim_{s\rightarrow 0}\int_{B_1\backslash B_s}v_{i+1}dx=0,
\end{equation}here $v_i=|x_{n+1}|^{2a-1}(-\widetilde\Delta_{n+1,a})^i u,i=0,\cdots,p-1$.
\end{theorem}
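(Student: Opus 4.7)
The plan is to put the iterated weighted operator into divergence form, apply the divergence theorem on the annulus $B_1\setminus B_s$, and send $s\to 0^+$. The outer boundary of this annulus will produce exactly the $\int_{\partial B_1}[\partial_r v_i-(2a-1)v_i]\,dS$ term, so the content of the theorem will be the vanishing of the inner boundary contribution at the shrinking sphere.

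First I would record the identity
\[
|x_{n+1}|^{2a-1}\widetilde\Delta_{n+1,a}\psi=\operatorname{div}\!\bigl(|x_{n+1}|^{2a-1}\nabla\psi\bigr),
\]
valid classically wherever $x_{n+1}\neq 0$. Setting $w_i:=(-\widetilde\Delta_{n+1,a})^iu$, the evenness of $u$ in $x_{n+1}$ gives $\partial_{x_{n+1}}w_i\equiv 0$ on $\{x_{n+1}=0\}$, so the weighted flux $|x_{n+1}|^{2a-1}\nabla w_i$ extends continuously across the hyperplane (since $2a-1\ge 1$); hence $v_{i+1}=-\operatorname{div}(|x_{n+1}|^{2a-1}\nabla w_i)$ holds classically on $B_1\setminus\{0\}$. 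The divergence theorem on $B_1\setminus\overline{B_s}$ then gives
\[
\int_{B_1\setminus B_s}v_{i+1}\,dx=-\int_{\partial B_1}|x_{n+1}|^{2a-1}\partial_r w_i\,dS+\int_{\partial B_s}|x_{n+1}|^{2a-1}\partial_r w_i\,dS,
\]
and, using $\partial_r|x_{n+1}|^{2a-1}=(2a-1)|x_{n+1}|^{2a-1}/|x|$, the product rule on $\partial B_1$ yields $|x_{n+1}|^{2a-1}\partial_r w_i=\partial_r v_i-(2a-1)v_i$. Thus the theorem reduces to showing $F_i(s):=\int_{\partial B_s}|x_{n+1}|^{2a-1}\partial_r w_i\,dS\to 0$ as $s\to 0^+$ for each $i=0,\ldots,p-1$.

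I would prove this by downward induction starting at $i=p-1$. Here $v_p=|x_{n+1}|^{2a-1}|x|^{-\tau}u^\alpha\in L^1(B_1)$ by assumption, and applying the divergence identity on the annulus $B_{s_2}\setminus B_{s_1}$ gives $F_{p-1}(s_2)-F_{p-1}(s_1)=-\int_{B_{s_2}\setminus B_{s_1}}v_p\,dx$, forcing $F_{p-1}$ to be Cauchy and producing a limit $\ell_{p-1}$. To pin down $\ell_{p-1}=0$ I would average in $s$: by coarea,
\[
\int_0^\delta F_{p-1}(r)\,dr=\int_{B_\delta}|x_{n+1}|^{2a-1}\partial_r w_{p-1}\,dx,
\]
and then rewrite the right-hand side via radial integration by parts in terms of the weighted spherical mean $r\mapsto\int_{\partial B_r}|x_{n+1}|^{2a-1}w_{p-1}\,dS$ (in the spirit of the weighted average $\bar u_w$ introduced in the introduction). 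Dividing by $\delta$ and letting $\delta\to 0$, the $L^1$ control on $v_p$ excludes any fundamental-solution-type singularity of $w_{p-1}$ at the origin, giving $\ell_{p-1}=0$. For the inductive step $i<p-1$, the already-proved identity at level $i+1$ shows that $\int_{B_1\setminus B_s}v_{i+2}\,dx$ converges; combined with the equation $-\operatorname{div}(|x_{n+1}|^{2a-1}\nabla w_{i+1})=v_{i+2}$, this yields enough control on $v_{i+1}$ near the origin to repeat the Cauchy-plus-averaging argument verbatim.

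The main obstacle is precisely the step $\ell_{p-1}=0$: the Cauchy argument alone produces only a finite limit, and converting the $L^1$ hypothesis into the absence of a Dirac mass at the origin for $-\operatorname{div}(|x_{n+1}|^{2a-1}\nabla w_{p-1})$ requires careful handling of the anisotropic weight $|x_{n+1}|^{2a-1}$. This is the weighted analogue of the polyharmonic Green formula of \cite{Li1996,ChernYang}, and the weight, which forces the use of the weighted spherical averages rather than the classical ones, is the novel technical wrinkle that distinguishes the present argument from its unweighted predecessors.
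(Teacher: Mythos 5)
Your reduction is correct: the divergence-form identity $v_{i+1}=-\operatorname{div}(|x_{n+1}|^{2a-1}\nabla w_i)$ and the divergence theorem on $B_1\setminus\overline{B_s}$ do show that \eqref{intromain3} is equivalent to $F_i(s):=\int_{\partial B_s}|x_{n+1}|^{2a-1}\partial_r w_i\,dS\to 0$. You also correctly observe, via the Cauchy argument, that each $F_i(s)$ has a finite limit $\ell_i$. However, the step you flag as the ``main obstacle'' --- pinning $\ell_{p-1}=0$ by averaging $F_{p-1}$ in $s$ and invoking only the $L^1$ control on $v_p$ --- is in fact a genuine gap, and the averaging computation as sketched will not close it. The relation $F_{p-1}(r)=r^{n+2a-1}\bigl(r^{-(n+2a-1)}M(r)\bigr)'$, with $M(r)=\int_{\partial B_r}v_{p-1}\,dS$, shows that $F_{p-1}\to\ell_{p-1}$ forces only $M(r)\sim -\tfrac{\ell_{p-1}}{n+2a-2}\,r$; this first-order behavior of $M$ is perfectly compatible with a nonzero $\ell_{p-1}$, and both $\tfrac1\delta\int_0^\delta F_{p-1}\,dr$ and $\int_0^\delta M(r)r^{-1}\,dr$ stay bounded. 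So the proposed averaging extracts nothing new beyond the Cauchy limit, and ``$L^1$ control on $v_p$ excludes a fundamental-solution-type singularity of $w_{p-1}$'' is not a consequence of that control alone: if $v_p\equiv 0$ near the origin, $w_{p-1}$ could still behave like $c|x|^{-(n+2a-2)}$ and give $\ell_{p-1}\neq 0$.

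What actually makes the paper's argument work is different in kind. The paper tests the equation against radial functions $\varphi_0=\varphi(|x|)\in C_c^\infty(B_1)$, multiplied by a cutoff $\eta_\epsilon$ that vanishes in $B_\epsilon$. The error $\psi_p$ generated by commutators with the cutoff is supported in $B_{2\epsilon}\setminus B_\epsilon$ with $|\psi_p|\leq C\epsilon^{-2p}$, and the crucial estimate is the H\"older duality
\[
\int u\,|x_{n+1}|^{2a-1}|\psi_p|\,dx
\leq C\Bigl(\int_{B_1}|x|^{-\tau}|x_{n+1}|^{2a-1}u^\alpha\Bigr)^{1/\alpha}
\Bigl(\int |x_{n+1}|^{2a-1}|x|^{\alpha'\tau/\alpha}|\psi_p|^{\alpha'}\Bigr)^{1/\alpha'}
\leq C\epsilon^{2p/\alpha}\to 0,
\]
which uses the \emph{specific} structure $v_p=|x_{n+1}|^{2a-1}|x|^{-\tau}u^\alpha$, the exact value of $\tau$, the exponent $\alpha>1$, and the scaling of $\psi_p$ --- not merely the membership $v_p\in L^1$. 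This gives the weak identity \eqref{401}, and comparing it with the classical integration by parts on $B_1\setminus B_s$ produces $\sum_i\beta_i\varphi_i(0)=0$; plugging in $\varphi_0=|x|^{2i}$ near the origin then kills the $\beta_i$ (equivalently, your $\ell_i$) one by one. Your downward induction also has a logical wrinkle: the identity at level $i+1$ does not by itself give extra control on $v_{i+1}$ near the origin beyond what Lemma~\ref{div} already provides independently. In short, the structural H\"older/cutoff estimate and the deliberate choice of radial polynomial test functions are the missing ingredients; without them, the Cauchy-plus-averaging strategy stalls at ``$\ell_i$ exists'' rather than ``$\ell_i=0$.''
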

\par The present paper is organized as follows. In Section 2, we will collect some preliminary results concerning about the maximal principles and the asymptotic properties. In Section 3, we will prove the ``superharmonic" property of $(-\widetilde\Delta_{n+1,a})^i u$ or Theorem \ref{mainthm2}. This section mainly follows the arguments of \cite{WeiXu} except for the utility of weighted spherical average. We will establish a divergence identity in Section 4. This is a generalization of the works of \cite{Li1996} and \cite{ChernYang}. Also there are some interesting ideas in both Section 3 and Section 4. The last section is devoted to prove Theorem \ref{mainthm1}.

\section{Preliminary results}
\setcounter{section}{2} \setcounter{equation}{0}
\setcounter{theorem}{0}\setcounter{lemma}{0}
In this section, we first collect some basic facts. \begin{lemma}
\label{prelem1}If $u(x,y)\in C^{2k}(\overline{\mathbb R^{n+1}_+})$ and $v(x,t)=u(x,\frac{t^2}4)$, then we have $v(x,t)\in C^{2k}(\overline{\mathbb R^{n+1}_+})$ and \begin{equation}
\label{pre1}\frac{\partial^{2l-1} v(x,t)}{\partial t^{2l-1}}|_{t=0}=0,\quad l=1,2,\cdots,k.\end{equation}\end{lemma}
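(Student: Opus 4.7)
The underlying idea is that $v(x,t)=u(x,t^{2}/4)$ depends on $t$ only through the even function $h(t):=t^{2}/4$. If one extends the formula to $t<0$ by the same expression, the extension is automatically even in $t$; so as soon as $v$ is known to be $C^{2k}$, every odd-order $t$-derivative must vanish at $t=0$, giving \eqref{pre1}. To establish the $C^{2k}$ regularity I would use Fa\`a di Bruno's formula. Since $h'(t)=t/2$, $h''(t)=1/2$, and $h^{(j)}\equiv 0$ for $j\ge 3$, the chain-rule sum collapses to a \emph{finite} sum over partitions of $\{1,\dots,m\}$ whose blocks have size at most two.

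Concretely, a simple induction on $m$, $1\le m\le 2k$, yields
\begin{equation*}
\partial_t^{m} v(x,t)\;=\;\sum_{j=0}^{\lfloor m/2\rfloor} c_{m,j}\, t^{\,m-2j}\,\partial_y^{\,m-j} u\!\left(x,\tfrac{t^{2}}{4}\right),
\end{equation*}
with explicit positive constants $c_{m,j}$ (one checks $c_{1,0}=1/2$, $c_{2,0}=1/4$, $c_{2,1}=1/2$, etc.). Because $u\in C^{2k}(\overline{\mathbb{R}^{n+1}_{+}})$, each $\partial_y^{m-j}u$ with $m-j\le m\le 2k$ is continuous up to $\{t=0\}$, so every summand is continuous on $\overline{\mathbb{R}^{n+1}_{+}}$, proving the $C^{2k}$ regularity in $t$. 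Joint regularity in $(x,t)$ follows by applying the same expansion after first differentiating in $x$: for $|\beta|+m\le 2k$,
\begin{equation*}
\partial_x^{\beta}\partial_t^{m} v(x,t)\;=\;\sum_{j=0}^{\lfloor m/2\rfloor} c_{m,j}\, t^{\,m-2j}\,\partial_x^{\beta}\partial_y^{\,m-j} u\!\left(x,\tfrac{t^{2}}{4}\right),
\end{equation*}
and the bound $|\beta|+(m-j)\le 2k$ guarantees that each term is continuous.

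Finally, when $m=2l-1$ is odd every exponent $m-2j$ is also odd, so $t^{\,m-2j}$ vanishes at $t=0$ for every $j$ in the sum; hence $\partial_t^{2l-1}v(x,t)|_{t=0}=0$, which is \eqref{pre1}. I do not anticipate a substantive obstacle: the entire argument is elementary once one observes that $h(t)=t^{2}/4$ is a polynomial, which truncates the Fa\`a di Bruno expansion to an explicit finite form and makes continuity at $t=0$ immediate from the assumed continuity of the $\partial_x^{\beta}\partial_y^{j}u$.
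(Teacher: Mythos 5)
Your proof is correct and takes essentially the same route as the paper: an induction on the derivative order producing an explicit formula in which every odd-order $t$-derivative carries a strictly positive power of $t$, hence vanishes at $t=0$. Your single-index formula $\partial_t^{m}v=\sum_{j=0}^{\lfloor m/2\rfloor}c_{m,j}\,t^{m-2j}\partial_y^{m-j}u$ is a tidier version of the paper's double-indexed one (the paper allows coefficients that are in fact zero), and you are slightly more careful than the paper in justifying the $C^{2k}$ regularity rather than calling it obvious, but the substance is identical.
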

\begin{proof}
It is obvious that $v(x,t)\in C^{2k}(\overline{\mathbb R^{n+1}_+})$. We only need to show \eqref{pre1} is true. For $l=1$, one has $\frac{\partial v}{\partial t}=\frac t2\frac{\partial u}{\partial y}$ for $t=2\sqrt y$. We prove \eqref{pre1} by induction. Suppose for $l$ we have \begin{equation}\label{pre2}\frac{\partial^{2l-1} v}{\partial t^{2l-1}}=\sum_{k=1}^{2l-1}\sum_{i=1}^lc_{ik,l}t^{2i-1}\frac{\partial^k u}{\partial y^k},\quad \frac{\partial^{2l} v}{\partial t^{2l}}=\sum_{k=1}^{2l}\sum_{i=0}^lc'_{ik,l}t^{2i}\frac{\partial^k u}{\partial y^k}\end{equation} for some constants $c_{ik,l},c'_{ik,l}$. Then for $l+1$, one can see\begin{eqnarray*}
&&\frac{\partial^{2l+1} v}{\partial t^{2l+1}}=\sum_{k=1}^{2l}\sum_{i=0}^l2ic'_{ik,l}t^{2i-1}\frac{\partial^k u}{\partial y^k}+\sum_{k=1}^{2l}\sum_{i=0}^l\frac 12c'_{ik,l}t^{2i+1}\frac{\partial^{k+1} u}{\partial y^{k+1}}=\sum_{k=1}^{2l+1}\sum_{i=1}^{l+1} c_{ik,l+1}t^{2i-1}\frac{\partial^k u}{\partial y^k}\\
&&\frac{\partial^{2l+2} v}{\partial t^{2l+2}}=\sum_{k=1}^{2l+1}\sum_{i=1}^{l+1}(2i-1)c_{ik,l+1}t^{2i-2}\frac{\partial^k u}{\partial y^k}+\sum_{k=1}^{2l+1}\sum_{i=1}^{l+1}\frac 12 c_{ik,l+1}t^{2i}\frac{\partial^{k+1} u}{\partial y^{k+1}}=\sum_{k=1}^{2l+2}\sum_{i=0}^{l+1} c'_{ik,l+1}t^{2i}\frac{\partial^k u}{\partial y^k}
\end{eqnarray*} This proves \eqref{pre2}. By \eqref{pre2}, it is easy to see that $\frac{\partial^{2l-1} v(x,t)}{\partial t^{2l-1}}|_{t=0}=0,\quad l=1,2,\cdots,k$.
\end{proof}
From Lemma \ref{prelem1}, we have the following remark.
\begin{remark}\label{prerem1}
For any $u(x,y)\in C^{2p}(\overline{\mathbb R^{n+1}_+})$, if we take transformation $x_{n+1}=2\sqrt y$ and $\tilde u(x,x_{n+1})=u(x,y)$. Then we can take the even extension $\tilde u(x,x_{n+1})=\tilde u(x,-x_{n+1})$ such that $\tilde u(x,x_{n+1})\in C^{2p}(\mathbb R^{n+1})$. This means \eqref{intromain2} is still true in $\mathbb R^{n+1}$ if we take even extension with respect to $x_{n+1}$.
\end{remark}
By Lemma \ref{prelem1} and Remark \ref{prerem1}, we shall always consider \eqref{intromain2} in $\mathbb R^{n+1}$ by even extension.
%
\par In order to use the moving plane method, we need some new maximal principles.  Consider the following
elliptic operator
$$L(u)=\displaystyle \sum_{i=1}^{n+1} a_{ij}(x)\partial_{ij}u+\sum_{i=1}^n
b_i(x) \partial_iu+\frac{a(x)}{x_{n+1}}\partial_{n+1}u.$$ All the
coefficients $a_{ij}(x),b_i(x),a(x)\in C(\mathbb R^{n+1})$, $a(x)\geq  0$
and $(a_{ij})$ is a positive definite matrix. Then we shall have
the following two lemmas. Let $B_1$ be the unit ball centered at origin.
\begin{lemma}\label{prelem2} Suppose that $u\in
C^2(B_1)\cap C(\bar{B}_1)$ with $\partial_{n+1}u(x',0)=0$ satisfies
that\begin{equation*} -L(u)\geq 0 \text{ in } B_1.
\end{equation*} Then either $u$ is a constant or $u$ can not attain its minimum in
$B_1$.
\end{lemma}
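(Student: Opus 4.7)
The plan is to decouple the problem across the singular hyperplane $\{x_{n+1}=0\}$. On each open half ball $B_1^\pm := B_1\cap\{\pm x_{n+1}>0\}$ the coefficient $a(x)/x_{n+1}$ is bounded on every compact subset, so the classical strong minimum principle applies on each exhausting subdomain $B_1\cap\{\pm x_{n+1}>\delta\}$ and propagates to all of $B_1^\pm$ by letting $\delta\to 0$. The nontrivial task is to glue the two halves together using the hypothesis $\partial_{n+1}u(x',0)=0$, which I plan to do with a Hopf-type boundary lemma applied at points of $\{x_{n+1}=0\}\cap B_1$.

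Assume $u$ attains its minimum $m$ at some $p\in B_1$. If $p_{n+1}>0$, the half-ball strong minimum principle gives $u\equiv m$ on $B_1^+$ and, by continuity, also on $\{x_{n+1}=0\}\cap B_1$. If $u$ were not constant on $B_1^-$, the same principle would force $u>m$ strictly on $B_1^-$, making every $p_0=(x_0',0)$ with $|x_0'|<1$ a boundary minimum of $u$ on $\overline{B_1^-}\cap B_1$. I would apply Hopf at such a $p_0$ using the interior ball $B_R(y_0)\subset B_1^-$ with $y_0=(x_0',-R)$, $R$ small, tangent to $\{x_{n+1}=0\}$ at $p_0$, and the classical barrier $v(x)=e^{-\beta|x-y_0|^2}-e^{-\beta R^2}$ on the annulus $A=B_R(y_0)\setminus\overline{B_{R/2}(y_0)}$. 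The standard comparison $u-m\geq\epsilon v$ on $A$ then yields $\partial_{n+1}u(p_0)\leq -2\epsilon\beta R e^{-\beta R^2}<0$, contradicting $\partial_{n+1}u(p_0)=0$. Symmetric reasoning covers the cases $p_{n+1}<0$ and $p_{n+1}=0$ (in the last, Hopf is invoked on both sides of $p$, and in each case the outward normal derivative at $p$ is forced to have a strict sign opposite to zero).

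The main obstacle is verifying $Lv\geq 0$ on $A$ in spite of the unbounded coefficient $a(x)/x_{n+1}$. The contribution of the singular term to $Lv$ equals $-2\beta\, a(x)\bigl(1+R/x_{n+1}\bigr)e^{-\beta|x-y_0|^2}$; on the subregion where $x_{n+1}\leq -R$ the factor $|1+R/x_{n+1}|$ is at most $1$, while on $-R<x_{n+1}<0$ the factor is negative and, since $a\geq 0$, the contribution has a favorable sign. Combined with the quadratic lower bound $\beta^2\lambda R^2$ from the uniformly elliptic principal part and the bounded contributions from the other first order terms, $\beta$ large forces $Lv\geq 0$ on $A$. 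The weak minimum principle required to conclude $u-m\geq\epsilon v$ on $A$ is obtained by the standard perturbation $h\mapsto h+\delta e^{Kx_1}$, which still works because $\partial_{n+1}e^{Kx_1}=0$ annihilates the singular coefficient and reduces the argument to the bounded-coefficient setting in the tangential direction.
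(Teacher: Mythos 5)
The paper itself does not prove this lemma; it says ``Lemma 2.2 and Lemma 2.3 are obtained in [Huang1], we omit the proof here,'' so there is no in-paper proof to compare against. Assessed on its own merits, your argument is essentially correct and complete: decompose across $\{x_{n+1}=0\}$, apply the classical strong minimum principle on the exhausting slabs $B_1\cap\{\pm x_{n+1}>\delta\}$ to propagate $u\equiv m$ to whichever half contains a minimum, and then close the gap with a Hopf barrier at the interface. The decisive observation — that the singular term acting on the Gaussian barrier equals $-2\beta a(x)\bigl(1+R/x_{n+1}\bigr)e^{-\beta|x-y_0|^2}$, which is either bounded by $O(\beta)e^{-\beta|\cdot|^2}$ (when $-2R<x_{n+1}\le -R$) or has a favorable sign since $a\ge 0$ (when $-R<x_{n+1}<0$) — is exactly what makes the Hopf comparison go through, and your verification is correct.

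Two small remarks. First, the perturbation should be a \emph{subtraction}, $w\mapsto w-\delta e^{Kx_1}$, so that $-L(w-\delta e^{Kx_1})=-Lw+\delta L(e^{Kx_1})>0$ becomes strict; as written the sign is backwards. Second, the perturbation is actually superfluous: once $\beta$ is chosen large enough that $Lv>0$ strictly on $A$ (which your estimates already give, since the elliptic part contributes at least $\beta^2\lambda R^2 e^{-\beta|x-y_0|^2}$ minus $O(\beta)e^{-\beta|x-y_0|^2}$ terms), one has $-L(u-m-\epsilon v)>0$ strictly. At any putative negative interior minimum $q\in A$ the full gradient vanishes — in particular $\partial_{n+1}(u-m-\epsilon v)(q)=0$, which annihilates the singular term \emph{pointwise} regardless of how large $a(x)/x_{n+1}$ is at $q$ — while $D^2\ge 0$ forces $L(u-m-\epsilon v)(q)\ge 0$, a contradiction. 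This pointwise cancellation is the real reason the unbounded coefficient causes no harm in the annulus, and stating it explicitly would make the ``weak minimum principle'' step self-contained without invoking the $e^{Kx_1}$ trick at all.
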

\begin{lemma}\label{prelem3}
Suppose that $u(x)\in C^2(B_1)\cap C^1(\bar{B}_1)$ with
$\partial_{n+1}u(x',0)=0$ satisfies that\begin{equation} -L(u)\geq 0
\text{ in } B_1.
\end{equation}
If $u$ attains its minimum at $x^0\in \partial B_1$, then either
$u\equiv const$ or $$-\frac{\partial u}{\partial n}|_{x=x^0}>0,\ n
\text{ is the outward normal to $\partial B_1$ at } x^0.$$
\end{lemma}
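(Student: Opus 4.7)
The plan is to adapt the classical Hopf boundary point lemma to the degenerate operator $L$, with extra care needed whenever $x^0$ lies on the degeneracy hyperplane $\{x_{n+1}=0\}$. First, by applying the strong minimum principle (Lemma \ref{prelem2}) to the non-constant case, I may assume $u(x)>u(x^0)$ for every $x\in B_1$. Subtracting the constant, I work with $\tilde u=u-u(x^0)\ge 0$ on $\overline B_1$, which vanishes at $x^0$, is strictly positive on $B_1$, and still satisfies $-L\tilde u\ge 0$ together with the Neumann condition $\partial_{n+1}\tilde u(x',0)=0$.

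Next I choose an interior tangent ball $B_\rho(y)\subset B_1$ with $x^0\in\partial B_\rho(y)$, and introduce the exponential barrier $w(x)=e^{-\alpha|x-y|^2}-e^{-\alpha\rho^2}$ on the annular shell $D=B_\rho(y)\setminus\overline{B_{\rho/2}(y)}$. A direct computation gives
\begin{equation*}
L(w)=2\alpha e^{-\alpha r^2}\Bigl\{2\alpha\sum a_{ij}(x_i-y_i)(x_j-y_j)-\sum a_{ii}(x)-\sum_{i=1}^{n}b_i(x)(x_i-y_i)-\tfrac{a(x)}{x_{n+1}}(x_{n+1}-y_{n+1})\Bigr\},
\end{equation*}
where $r=|x-y|$. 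When $x_{n+1}^0>0$, a small $\rho$ keeps $D\subset\{x_{n+1}\ge c>0\}$, so the coefficient $a(x)/x_{n+1}$ is bounded and the standard choice of large $\alpha$ forces $L(w)\ge 0$ on $D$.

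The delicate case is $x_{n+1}^0=0$. Here I place the tangent ball with center $y=(1-\rho)x^0$, which lies on the hyperplane, i.e.\ $y_{n+1}=0$. The troublesome factor then becomes $\frac{a(x)}{x_{n+1}}(x_{n+1}-y_{n+1})=a(x)$, which is bounded: the singular coefficient is exactly cancelled by the linear factor the barrier produces. Uniform ellipticity still provides $\sum a_{ij}(x_i-y_i)(x_j-y_j)\ge\lambda(\rho/2)^2$ on $D$, so a large $\alpha$ again yields $L(w)\ge 0$. Moreover, since $y_{n+1}=0$, $w$ is even in $x_{n+1}$, hence $\partial_{n+1}w(x',0)=0$ on $D\cap\{x_{n+1}=0\}$.

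Finally I consider $v=\tilde u-\varepsilon w$ on $D$. It satisfies $-Lv\ge 0$ and the Neumann condition on $D\cap\{x_{n+1}=0\}$, and for sufficiently small $\varepsilon>0$ one has $v\ge 0$ on $\partial B_{\rho/2}(y)$ (where $\tilde u$ is bounded below by a positive constant) and on $\partial B_\rho(y)$ (where $w=0$ and $\tilde u\ge 0$, with equality only at $x^0$). An extension of Lemma \ref{prelem2} to the annular domain $D$ then gives $v\ge 0$ throughout $D$; since $v(x^0)=0$, the outward normal derivative obeys $\partial_n\tilde u(x^0)\le\varepsilon\,\partial_n w(x^0)=-2\alpha\rho\varepsilon\,e^{-\alpha\rho^2}<0$, which is the claimed strict inequality. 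The main obstacle I anticipate is justifying the maximum principle on the annulus that meets the degeneracy set and verifying that the barrier is admissible there; the cancellation noted above is precisely what allows the classical argument to go through.
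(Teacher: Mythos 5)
The paper itself omits the proof of this lemma, citing \cite{Huang1}; with that caveat, your Hopf-barrier argument is correct and is the standard way this result is obtained. The key point — placing the tangent ball's center $y=(1-\rho)x^0$ on the degeneracy hyperplane when $x_{n+1}^0=0$, so that the singular coefficient $\tfrac{a(x)}{x_{n+1}}$ is multiplied by $x_{n+1}-y_{n+1}=x_{n+1}$ and the barrier computation stays bounded while simultaneously $w$ becomes even in $x_{n+1}$ and hence satisfies the Neumann condition — is exactly right, and the reduction to the strong minimum principle (Lemma \ref{prelem2}) on the annulus $D$ is routine since $D$ is symmetric about $\{x_{n+1}=0\}$ and the argument for Lemma \ref{prelem2} is local. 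Two minor points worth tightening: (i) you tacitly assume $x_{n+1}^0\geq 0$ in the regular case; the case $x_{n+1}^0<0$ is symmetric but should be stated; and (ii) strictly speaking Lemma \ref{prelem2} is a strong minimum principle, so you must first combine it with continuity of $v$ on $\overline D$ to conclude $\min_{\overline D}v=\min_{\partial D}v\geq 0$ before reading off the sign of $\partial_n v(x^0)$ — you wrote this implicitly, but it is where the argument turns.
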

Lemma \ref{prelem2} and Lemma \ref{prelem3} are obtained in \cite{Huang1}, we omit the proof here. Also, we need the following lemma for a punctured ball which was also proved in \cite{Huang1}.
\begin{lemma}\label{lem005}
Suppose that $v\in C^2(B_1\backslash \{0\})\cap C(\bar{B}_1\backslash\{0\})$ is a solution
of  the following problem with $n+2a>2$,
\begin{equation}\Delta_{n+1} v+\frac{2a-1}{x_{n+1}}\partial_{n+1}v\leq 0, \quad \text{in } B_1\backslash\{0\},\quad \partial_{n+1}v(x',0)=0.
\end{equation}If we have  $\displaystyle \underset{x\rightarrow 0}{\underline{\lim}}|x|^{n+2a-2}v(x)\geq 0$, then there holds $$v(x)\geq \underset{\partial B_1}{\inf}
v, \ \   \   \   \   \   \   \   \   \forall x\in B_1\backslash
\{0\}.$$
\end{lemma}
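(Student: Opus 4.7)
The plan is to use the fundamental solution of $\widetilde\Delta_{n+1,a}$ centered at the origin as a barrier to control the singularity of $v$ at $0$, and then invoke the strong maximum principle of Lemma \ref{prelem2} on the annular region $B_1\setminus\bar B_\rho$ before letting $\rho\to 0$. A direct computation for radial functions gives $\widetilde\Delta_{n+1,a}f(|x|)=f''(r)+\frac{n+2a-1}{r}f'(r)$, so the choice $\Phi(x):=|x|^{-(n+2a-2)}$ satisfies $\widetilde\Delta_{n+1,a}\Phi=0$ on $\mathbb R^{n+1}\setminus\{0\}$ (using $n+2a>2$); moreover $\partial_{n+1}\Phi(x',0)=0$ because $\Phi$ is even in $x_{n+1}$, so $\Phi$ is a valid test function for the class considered in Lemma \ref{prelem2}.

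Let $M:=\inf_{\partial B_1}v$ (which we may assume finite; else there is nothing to prove). For each $\varepsilon>0$ introduce the comparison function
$$w_\varepsilon(x):=v(x)-M+\varepsilon\bigl(\Phi(x)-1\bigr),$$
which still satisfies $\widetilde\Delta_{n+1,a}w_\varepsilon\le 0$ on $B_1\setminus\{0\}$ and $\partial_{n+1}w_\varepsilon(x',0)=0$. On $\partial B_1$ one immediately has $w_\varepsilon=v-M\ge 0$. The hypothesis $\underline{\lim}_{x\to 0}|x|^{n+2a-2}v(x)\ge 0$ supplies, for every $\delta>0$, a radius $\rho_\delta>0$ such that $v(x)\ge -\delta|x|^{-(n+2a-2)}$ on $B_{\rho_\delta}\setminus\{0\}$; choosing $\delta=\varepsilon/2$ and shrinking $\rho\le\rho_{\varepsilon/2}$ further if needed forces
$$w_\varepsilon(x)\ge \tfrac{\varepsilon}{2}\rho^{-(n+2a-2)}-M-\varepsilon>0\quad\text{on }\partial B_\rho,$$
so that $w_\varepsilon\ge 0$ on the whole boundary of the annulus $B_1\setminus\bar B_\rho$.

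Now I would apply the strong maximum principle: the proof of Lemma \ref{prelem2} carries over verbatim to the bounded domain $B_1\setminus\bar B_\rho$, since the boundary condition $\partial_{n+1}w_\varepsilon(x',0)=0$ is precisely what neutralizes the singular coefficient $\tfrac{2a-1}{x_{n+1}}$ on the slab $\{x_{n+1}=0\}$. Either $w_\varepsilon\equiv\text{const}$ or $w_\varepsilon$ cannot attain its minimum in the interior, so in both cases $w_\varepsilon\ge 0$ on $B_1\setminus\bar B_\rho$. Since $\rho$ may be taken arbitrarily small, $w_\varepsilon\ge 0$ on all of $B_1\setminus\{0\}$; rearranging, $v(x)\ge M-\varepsilon(|x|^{-(n+2a-2)}-1)$, and for each fixed $x\in B_1\setminus\{0\}$ sending $\varepsilon\to 0^+$ gives $v(x)\ge M$. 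The main obstacle is the combination of the degenerate singular coefficient on $\{x_{n+1}=0\}$ with the puncture at the origin: the boundary condition at $x_{n+1}=0$ legitimizes the use of Lemma \ref{prelem2} on the annulus, while the barrier $\Phi$, tuned to the correct homogeneity $n+2a-2$, absorbs the $\underline{\lim}$ hypothesis so that no contribution survives from the removed point $\{0\}$ as $\rho,\varepsilon\to 0$.
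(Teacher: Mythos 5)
Your proof is correct and follows the standard comparison-function strategy. The key computation that $\Phi(x)=|x|^{-(n+2a-2)}$ solves $\widetilde\Delta_{n+1,a}\Phi=0$ away from the origin is right (for radial $f$, $\widetilde\Delta_{n+1,a}f=f''+\frac{n+2a-1}{r}f'$, and this power annihilates that operator), the radial symmetry of $\Phi$ gives $\partial_{n+1}\Phi(x',0)=0$ automatically, and the translation of the $\underline{\lim}$ hypothesis into the pointwise barrier bound $v(x)>-\delta|x|^{-(n+2a-2)}$ near $0$ is exactly what is needed to make $w_\varepsilon>0$ on $\partial B_\rho$ once $\rho$ is shrunk. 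The two limits (first $\rho\to 0^+$ to propagate $w_\varepsilon\ge 0$ over all of $B_1\setminus\{0\}$, then $\varepsilon\to 0^+$ pointwise) are taken in the correct order. The paper itself cites \cite{Huang1} for this lemma and does not reproduce the argument, but this barrier-plus-maximum-principle route is the natural one for a punctured-domain minimum principle and is almost certainly the intended proof.

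One small point worth making explicit: Lemma~\ref{prelem2} is stated on the ball $B_1$, whereas you apply it on the annulus $B_1\setminus\bar B_\rho$. You correctly observe that the proof transfers because the only delicate issue is the singular coefficient $\tfrac{2a-1}{x_{n+1}}$ on $\{x_{n+1}=0\}$, which the boundary condition $\partial_{n+1}w_\varepsilon(x',0)=0$ handles exactly as in the ball case; away from $\{x_{n+1}=0\}$ the operator is uniformly elliptic with smooth coefficients and the usual Hopf strong maximum principle applies on any connected open set. Phrasing this as ``carries over verbatim'' is acceptable, though if you were writing this up formally you would either restate Lemma~\ref{prelem2} for a general bounded connected domain or give the one-line reduction. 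You also quietly use that $w_\varepsilon\in C^2$ on the open annulus and $C^0$ on its closure, which follows from the hypotheses on $v$ and the smoothness of $\Phi$ away from $0$; this is fine but deserved a word. These are presentation matters, not gaps.
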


Also we have the following Kelvin transformation,
\begin{equation}\label{pre3}
u^*(x)=|x|^{2p-n-2a}u\left(\frac x{|x|^2}\right).
\end{equation}
If $u(x)$ satisfies \eqref{intromain2}, then we have
\begin{equation}
\label{pre4}(-\widetilde\Delta_{n+1,a})^p u^*=|x|^{-\tau}\left(u^*\right)^{\alpha},\quad \text{in}\quad \mathbb R^{n+1}\backslash\{0\},\tau=(n+2a+2p)-\alpha(n+2a-2p).\end{equation}
As the proof of \eqref{pre4} is of independent interest, we will present it  in the Appendix.
\begin{lemma}
\label{prelem4}
Let $u(x)\in C^\infty(\mathbb R^{n+1})$ be an even function with respect to $x_{n+1}$ and $u^*(x)$ be defined in \eqref{pre3}. Then \begin{equation}\label{pre7}
(-\widetilde\Delta_{n+1,a})^i u^*(x)=\frac{c_i}{|x|^{n+2a-2p+2i}}f_i\left(\frac x{|x|^2}\right),\quad i=0,\cdots,p-1\end{equation} for some constants $c_i>0$ and smooth functions $f_i(x)$ with $f_i(0)=u(0)$ and $f_i(x)$ are even functions with respect to $x_{n+1}$. Moreover, we have $(-\widetilde \Delta_{n+1,a})^i u^*(x)>0$ for $|x|$ large enough.
\end{lemma}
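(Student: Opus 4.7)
The plan is to prove \eqref{pre7} by induction on $i$ from $0$ to $p-1$, using a conformal transformation identity for $\widetilde\Delta_{n+1,a}$. Writing $\gamma := n+2a$, the operator acts on radial profiles as a $\gamma$-dimensional Laplacian, since $\widetilde\Delta_{n+1,a} r^k = k(k+\gamma-2)\,r^{k-2}$. Consequently $|x|^{2-\gamma}$ is its fundamental solution and the standard Kelvin identity
\begin{equation*}
\widetilde\Delta_{n+1,a}\bigl(|x|^{2-\gamma}\,w(x/|x|^2)\bigr)=|x|^{-\gamma-2}\,(\widetilde\Delta_{n+1,a} w)(x/|x|^2)
\end{equation*}
holds. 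Rewriting $|x|^{-\mu}v(x/|x|^2) = |x|^{2-\gamma}\bigl[|y|^{\mu+2-\gamma} v(y)\bigr]_{y=x/|x|^2}$ and applying the product rule to $\widetilde\Delta_{n+1,a}(|y|^{\beta}v)$ yields the key formula, valid for every $\mu\in\mathbb R$ and smooth $v$:
\begin{equation*}
\widetilde\Delta_{n+1,a}\bigl(|x|^{-\mu}v(x/|x|^2)\bigr)=|x|^{-\mu-2}\,(L_\beta v)(x/|x|^2),
\end{equation*}
where $\beta:=\mu+2-\gamma$ and $L_\beta v(y):=|y|^2\widetilde\Delta_{n+1,a} v(y)+2\beta\, y\cdot\nabla v(y)+\beta(\beta+\gamma-2)\,v(y)$. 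Clearly $L_\beta$ preserves both smoothness and $x_{n+1}$-evenness.

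The base case $i=0$ is immediate from \eqref{pre3} with $c_0=1$ and $f_0=u$. For the inductive step, suppose the representation holds at level $i$ with $\mu_i=\gamma-2p+2i$. Applying $-\widetilde\Delta_{n+1,a}$ and using the identity above with $\beta_i=\mu_i+2-\gamma=-2(p-i-1)$ gives
\begin{equation*}
(-\widetilde\Delta_{n+1,a})^{i+1} u^*(x) = -c_i\,|x|^{-\mu_{i+1}}\,(L_{\beta_i}f_i)(x/|x|^2),
\end{equation*}
which has the required form with $\mu_{i+1}=\mu_i+2=\gamma-2p+2(i+1)$ and $f_{i+1}$ proportional to $-L_{\beta_i}f_i$. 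Evaluating $L_{\beta_i}f_i$ at $y=0$ kills the derivative terms, leaving $\beta_i(\beta_i+\gamma-2)f_i(0)=-2(p-i-1)(n+2a-2p+2i)\,u(0)$. For $0\le i\le p-2$, the factor $-2(p-i-1)(n+2a-2p+2i)$ is strictly negative (here the hypothesis $2p<n+2a$ is used), so the choice $c_{i+1}:=2c_i(p-i-1)(n+2a-2p+2i)>0$ makes $f_{i+1}(0)=u(0)$ (when $u(0)=0$ the identity $f_{i+1}(0)=0=u(0)$ is automatic and $c_{i+1}$ may be taken by the same formula). Smoothness and $x_{n+1}$-evenness of $f_{i+1}$ are inherited from $f_i$ via $L_{\beta_i}$. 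This carries the induction up through $i=p-1$.

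Finally, for the positivity at infinity, $x/|x|^2\to 0$ as $|x|\to\infty$, so by continuity $f_i(x/|x|^2)\to f_i(0)=u(0)>0$ (positivity of $u(0)$ holds in the intended applications where $u$ is a positive solution). Since $c_i>0$ and $|x|^{-\mu_i}>0$, we conclude $(-\widetilde\Delta_{n+1,a})^i u^*(x)>0$ for all sufficiently large $|x|$.

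The principal technical obstacle is the clean derivation of the weighted Kelvin identity together with the bookkeeping of signs in the recursion; the key structural observation is that the hypothesis $2p<n+2a$ is precisely what guarantees $-\beta_i(\beta_i+\gamma-2)>0$ for each $i=0,\dots,p-2$, so that the recursion produces positive constants $c_i$ at every step up to $i=p-1$ (and the recursion correctly stops there, since $\beta_{p-1}=0$).
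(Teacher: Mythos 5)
Your proof is correct and takes the same inductive route as the paper's: you apply $-\widetilde\Delta_{n+1,a}$ once to $|x|^{-(n+2a-2p+2i)}f_i(x/|x|^2)$, peel off a factor of $|x|^{-2}$, and read off the recursion for $c_{i+1}$ and $f_{i+1}$; your operator $-L_{\beta_i}$ is precisely the bracketed expression in the paper's one-step computation, so the two recursions coincide term by term (in particular your $c_{i+1}=2c_i(p-i-1)(n+2a-2p+2i)$ is the paper's $(2p-2i-2)(n+2a-2p+2i)c_i$). You were also right to flag that the final positivity claim implicitly needs $u(0)>0$, which the lemma statement omits but which holds in the paper's application to positive solutions.
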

\begin{proof}
We shall prove \eqref{pre7} by induction. For $i=0$, $c_0=1,f_0(x)=u(x)$. Now \begin{eqnarray*}
(-\widetilde\Delta_{n+1,a})^{i+1} u^*(x)&=&(-\widetilde\Delta_{n+1,a})\left(\frac{c_i}{|x|^{n+2a-2p+2i}}f_i\left(\frac x{|x|^2}\right)\right)\\
&=&\frac{c_i}{|x|^{n+2a-2p+2(i+1)}}\left\{(2p-2i-2)(n+2a-2p+2i)f_i\left(\frac{x}{|x|^2}\right)\right.\\
&&+\left.4(p-i-1)\sum_{j=1}^{n+1}\frac{x_j}{|x|^2}\frac{\partial f_i}{\partial x_j}\left(\frac{x}{|x|^2}\right)-|x|^{-2}(\widetilde\Delta_{n+1,a} f_i)\left(\frac{x}{|x|^2}\right)\right\}.
\end{eqnarray*}
Set \begin{eqnarray*}
&&(2p-2i-2)(n+2a-2p+2i)f_{i+1}(x)\\
&&=(2p-2i-2)(n+2a-2p+2i)f_{i}(x)+4(p-i-1)\sum_{j=1}^{n+1} x_j\frac{\partial f_i}{\partial x_j}(x)-|x|^2\widetilde\Delta_{n+1,a} f_i(x),\\
&&c_{i+1}=(2p-2i-2)(n+2a-2p+2i)c_i
\end{eqnarray*}
Then
$$(-\widetilde\Delta_{n+1,a})^{i+1} u^*(x)=\frac{c_{i+1}}{|x|^{n+2a-2p+2(i+1)}}f_{i+1}\left(\frac x{|x|^2}\right)$$
and
$$f_{i+1}(0)=f_i(0)=u(0),f_{i+1}(x',x_{n+1})=f_{i+1}(x',-x_{n+1}).$$
As $c_i>0$ for $i=0,\cdots,p-1$,$$(-\widetilde\Delta_{n+1,a})^i u^*(x)=\frac{c_iu(0)}{|x|^{n+2a-2p+2i}}\left(1+O\left(\frac1{|x|}\right)\right)>0$$ for $|x|$ large enough.
\end{proof}

\section{The ``superharmonic" property}
\setcounter{section}{3} \setcounter{equation}{0}
\setcounter{theorem}{0}\setcounter{lemma}{0}
%
%
%
This section is devoted to prove Theorem \ref{mainthm2}.
\par {\bf The proof of Theorem \ref{mainthm2}}:
\par Denote $u_i=(-\widetilde\Delta_{n+1,a})^i u$, $i=0,1,2...,p-1$ with $u_0=u$. We first
prove  $$u_{p-1}>0.$$

If not, there exists $x_0\in \mathbb R^{n+1}$ such that $$u_{p-1}(x_0)<0.$$
For simplicity we can just assume $x_0=0$. We need make some explanations here. As $\widetilde \Delta_{n+1,a}$ is invariant under the translation of $x_1,\cdots,x_n$, we can translate $x_1,\cdots,x_n$ to make the first $n$ coordinates to be $0$. Hence we may assume $x_0=(0,\cdots,0,b)$ for some $b\geq 0$. Now we make a transformation $\tilde u(x',x_{n+1},x_{n+2})=u(x',\sqrt{x_{n+1}^2+x_{n+2}^2})$ where $x'=(x_1,\cdots,x_n)$. By directly computation, we can derive that
$$(-\tilde\Delta_{n+2,a-\frac 12})^p \tilde u =\tilde u^\alpha,\quad \text{ in }\mathbb R^{n+2},$$
where $-\tilde\Delta_{n+2,a-\frac 12}=\sum_{i=1}^{n+2}\partial_{x_i}^2+\frac{2a-2}{x_{n+2}}\partial_{x_{n+2}}$. By the transformation, we have
$$\tilde u_{p-1}(x',c,d)=(-\tilde\Delta_{n+2,a-\frac 12})^{p-1}\tilde u(x',c,d)=(-\tilde\Delta_{n+1,a})^{p-1} u(x',b)=u_{p-1}(x',b), \text{ for }b=\sqrt{c^2+d^2}.$$
Set $x'=0, c=b, d=0$, we have for $\tilde u_{p-1}(0,b,0)<0$. Since $\tilde u $ is invariant under translation with respect to $x_1,\cdots,x_{n+1}$, we have after translation $\tilde u_{p-1}(0)<0$. Therefor, by the above arguments, we can just assume $x_0=0$, otherwise we can consider $\tilde u$.
  Set $$
\bar{f}(r)=\frac{1}{|\partial B_r(0)|}\int_{\partial B_r(0)}f dS.$$
From now on, if no confusion occurs, we will always normalize $|\mathbb S^n|=1$ where $\mathbb S^n$ is the unit sphere in $\mathbb R^{n+1}$.
Then for $u_{p-1}$, we have \begin{eqnarray*}
-\int_{B_r}|x_{n+1}|^{2a-1}u^\alpha dx&=&\int_{B_r}\nabla\cdot(
|x_{n+1}|^{2a-1}\nabla u_{p-1})dx\\&=& \int_{\partial B_r}
|x_{n+1}|^{2a-1}\frac{\partial u_{p-1}}{\partial \rho} dS
\\&=&\int_{\partial B_r}\frac{\partial(|x_{n+1}|^{2a-1}u_{p-1})}{\partial
\rho}-(2a-1)\frac{|x_{n+1}|^{2a-1}u_{p-1}}{r}dS,\quad \rho=|x|.
\end{eqnarray*}
In getting the last equality, we have used $$\frac{\partial|x_{n+1}|^{2a-1}}{\partial \rho}=(2a-1)\frac{|x_{n+1}|^{2a-1}}{\rho},\quad \rho=|x|.$$
Taking $w_{p-1}(x)=|x_{n+1}|^{2a-1}u_{p-1}(x)$, one gets
 that $$(r^n\bar{w}'_{p-1}-(2a-1)r^{n-1}\bar{w}_{p-1})'\leq 0.$$
This implies that
$$r^n\bar{w}'_{p-1}-(2a-1)r^{n-1}\bar{w}_{p-1}< 0\Rightarrow \frac{
\partial(r^{-(2a-1)}\bar{w}_{p-1})}{\partial r}<0,\text{ for } r>0.$$
Set $z_{p-1}(r)=r^{-(2a-1)}\bar{w}_{p-1}(r)$, then
$$z_{p-1}(r)<z_{p-1}(0)<0,\text{ for all } r>r_1=0.$$
$z_{p-1}(0)<0$ follows from\begin{eqnarray*}
z_{p-1}(0)&=&\lim_{r\rightarrow 0}r^{-(n+2a-1)}\int_{\partial B_r}|x_{n+1}|^{2a-1} u_{p-1} dS
\\&=& \lim_{r\rightarrow 0}r^{-n}\int_{\partial B_r}|\cos \theta_1|^{2a-1} u_{p-1} dS,\quad x_{n+1}=r\cos\theta_1
\\&=& u_{p-1}(0)\int_{\partial B_1} |\cos \theta_1|^{2a-1} dS<0.
\end{eqnarray*}
Repeating the above steps, it is easy to see that
$$z_{p-2}'(r)>-c_1z_{p-1}(0)r.$$
Hence
$$z_{p-2}(r)\geq c_2r^2,\text{ for }r\geq r_2>r_1.$$
By induction, it follows that
$$ (-1)^iz_{p-i}(r)\geq c_ir^{2(i-1)}\text{
for }r\geq r_i,i=1,\cdots,p,\quad \text{for}\quad c_i>0.$$
Hence if $p$ is odd, it's a contradiction
with $u>0$.

So $p$ must be even such that $$z_0(r)\geq c_0r^{\sigma_0},
\sigma_0=2(p-1)
$$ and $$(-1)^iz_{p-i},(-1)^iz'_{p-i}>0,i=1,\cdots,p,\text{ for } r>r_0>0.$$

We set $A=2\alpha(p-1)+n+2a+2p$ and assume that $$ z_0(r)\geq
\frac{c_0^{\alpha^k}r^{\sigma_k}}{A^{b_k}},\forall r\geq r_k.$$
Integrating by parts, one gets
\begin{equation}\begin{split} &\int_{\partial B_r}
\left(\frac{\partial\left(|x_{n+1}|^{2a-1}u_{p-1}\right)}{\partial
\rho}-
(2a-1)\frac{|x_{n+1}|^{2a-1}u_{p-1}}{r}\right)dS\\-&\int_{\partial
B_{r_k}}
\left(\frac{\partial\left({|x_{n+1}|^{2a-1}u_{p-1}}\right)}{\partial
\rho}-(2a-1)\frac{|x_{n+1}|^{2a-1}u_{p-1}}{r_k}\right)dS+
\int_{B_r\backslash
B_{r_k}}|x_{n+1}|^{2a-1}u^{\alpha}dx=0.\label{a402}\end{split}
\end{equation}
In fact, we have \begin{eqnarray*} \int_{B_r\backslash
B_{r_k}}|x_{n+1}|^{2a-1}u^{\alpha}dx&=&\int_{r_k}^r
d\rho\int_{\partial
B_{\rho}}(|x_{n+1}|^{2a-1}u)^{\alpha}|x_{n+1}|^{(1-\alpha)(2a-1)}dS\\
&\geq &\int_{r_k}^r \rho^{n+(1-\alpha)(2a-1)}\bar{w}_0^\alpha
d\rho\\&=&\int_{r_k}^r \rho^{n+2a-1}z_0^\alpha d\rho
\end{eqnarray*}In getting the second inequality, we have used $\alpha>1$, $|x_{n+1}|<\rho$ and the convexity of $g(t)=t^\alpha$.
Therefore, \eqref{a402} means that \begin{equation}\label{444} \begin{split}
r^{n+2a-1}z_{p-1}'(r)&\leq r_k^{n+2a-1}z_{p-1}'(r_k)-\int_{r_k}^r
\rho^{n+2a-1}z_0^\alpha d\rho\\&\leq -c_0^{\alpha^{k+1}}
\frac{r^{\alpha\sigma_k+n+2a}-r_k^{\alpha\sigma_k+n+2a}}{A^{\alpha b_k}(
\alpha\sigma_k+n+2a)}\\&\leq
-\frac{c_0^{\alpha^{k+1}}r^{\alpha\sigma_k+n+2a}}{2A^{\alpha b_k}(
\alpha\sigma_k+n+2a)}
\end{split}
\end{equation}for $r\geq 2^{\frac{1}{\alpha\sigma_k+n+2a}}r_k$, since $z'_{p-1}(r_k)<0$.
Hence
$$z_{p-1}'(r)\leq -\frac{c_0^{\alpha^{k+1}}r^{\alpha\sigma_k+1}}{2A^{\alpha b_k}(\alpha\sigma_k+n+2a)}.$$
Then
 \begin{eqnarray*}z_{p-1}(r)&\leq&
-\frac{c_0^{\alpha^{k+1}}r^{\alpha\sigma_k+2}}{4A^{\alpha b_k}(
\alpha\sigma_k+n+2a)(\alpha\sigma_k+2)}\\
&\leq &-\frac{c_0^{\alpha^{k+1}}r^{\alpha\sigma_k+2}}{4A^{\alpha b_k}(
\alpha\sigma_k+n+2a)^2}\end{eqnarray*} for $r\geq
2^{\frac{2}{\alpha\sigma_k+1}}r_k$. By induction, it follows that$$
(-1)^iz_{p-i}(r)\geq
\frac{c_0^{\alpha^{k+1}}r^{\alpha\sigma_k+2i}}{A^{\alpha
b_k}4^i(\alpha\sigma_k+n+2a+2p)^{2i}},r\geq
2^{\frac{2i}{\alpha\sigma_k+1}}r_k.$$ Especially, $$z_0(r)\geq \frac{
c_0^{\alpha^{k+1}}r^{\alpha\sigma_k+2p}}{A^{\alpha
b_k}4^p(\alpha\sigma_k+n+2a+2p)^{2p}},r\geq
2^{\frac{2p}{\alpha\sigma_k+1}}r_k.$$ Set $\sigma_0=2(p-1)$, $r_0$,
then
$$\sigma_{k+1}=\alpha\sigma_k+2p,r_{k+1}=2^{\frac{2p}{\alpha\sigma_k+1}}r_k.$$
First of all, by mathematical induction, it is easy to see that
$$A^{\alpha b_k}4^p(\alpha\sigma_k+n+2a+2p)^{2p}\leq A^{2p(k+1)+\alpha b_k}$$
if we notice that
$$2(\alpha\sigma_k+n+2a+2p)\leq A(\alpha\sigma_{k-1}+n+2a+2p).$$
 Thus, we also can set
$$b_0=0,b_{k+1}=\alpha b_k+2p(k+1).$$

Then we have
$$z_0(r)\geq \frac{c_0^{\alpha^{k+1}}r^{\sigma_{k+1}}}{A^{b_{k+1}}},r\geq r_{k+1}.$$
It's very important that we shall notice that
$$r_{k+1}\leq cr_0$$ where $c$ can be chosen to be
$2^{\sum_{k=0}^\infty\frac{2p}{\alpha\sigma_k+1}}$.

By a direct computation, we have
$$\sigma_k=2(p-1)\alpha^k+\frac{2p(\alpha^k-1)}{\alpha-1},b_k=2p\left(\frac{\alpha(\alpha^k-1)}{(\alpha-1)^2}-\frac k{\alpha-1}\right).$$

Taking $\bar r=\max(\frac{2A^{\frac{2p\alpha}{(\alpha-1)^2}}}{c_0},cr_0)$, we will have
$$z_0(\bar r)\geq\frac{c_0^{\alpha^{k+1}}\bar r^{\sigma_{k+1}}}{A^{b_{k+1}}}
 \rightarrow \infty \text{ as } k\rightarrow\infty$$ which is a contradiction. Hence $$u_{p-1}>0.$$
Next we claim that $$u_{p-i}>0,i=2,...,p-1.$$  By induction, we have for $i=1,\cdots,k$, $$u_{p-k}>0.$$
If $u_{p-k-1}(0)<0$, then from $$-\widetilde\Delta_{n+1,a}u_{p-k-1}=u_{p-k}>0,$$ following the same arguments as $k=1$, we have $ z_{p-k-1}(r)< z_{p-k-1}(0)<0$. Also $$(-1)^{i-k} z_{p-i}\geq c_1 r^{2(i-k-1)},\quad \text{for}\quad r\geq r_0,p\geq i\geq k+1.$$ If $p-k$ is odd, it is a contradiction to $ z_0>0$. Then $p-k$ must be even, this means $ z_0(r)\geq cr^{2(p-k-1)}\geq c r^2$ for $r\geq r_0>1$. By \eqref{444}, one can see \begin{equation} \begin{split}
r^{n+2a-1}z_{p-1}'(r)&\leq r_0^{n+2a-1}z_{p-1}'(r_0)-\int_{r_0}^r
\rho^{n+2a-1}z_0^\alpha d\rho\\&\leq r_0^{n+2a-1}z_{p-1}'(r_0) -
\frac{c^{\alpha} (r^{2\alpha+n+2a}-r_0^{2\alpha+n+2a})}{(
2\alpha+n+2a)}\\&\leq
-\frac{c^{\alpha}r^{2\alpha+n+2a}}{2(
2\alpha+n+2a)}
\end{split}
\end{equation}for $r>r_1\geq r_0$ since $r_0^{n+2a-1}z_{p-1}'(r_0)$ is bounded. This means \begin{equation*}
z_{p-1}(r)\leq -\frac{c^{\alpha}r^{2\alpha+2}}{4(n+2a+2\alpha)}
\end{equation*}for $r\geq r_2>r_1$ which contradicts to $z_{p-1}>0$. This ends the proof of Theorem \ref{mainthm2}.

\section{A divergence identity in a punctured domain}
In order to prove Theorem \ref{mainthm3}, we begin with the following lemma.
\begin{lemma}\label{div}
If all the assumptions of Theorem \ref{mainthm3} are satisfied, then for $i=0,\cdots, p$
\begin{equation}
\left|\lim_{s\rightarrow 0}\int_{B_1\backslash B_s}v_{i}dx\right|< \infty, \quad \text{here}\quad v_i=|x_{n+1}|^{2a-1}(-\widetilde\Delta_{n+1,a})^i u.
\end{equation}
\end{lemma}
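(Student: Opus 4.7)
The plan is downward induction on $i$, from $i=p$ down to $i=0$. Set $u_i := (-\widetilde\Delta_{n+1,a})^i u$ and $L_i(s) := \int_{B_1\setminus B_s} v_i\,dx$; the goal is to show $L_i(s)$ has a finite limit as $s\to 0^+$. The base case $i=p$ is immediate from \eqref{intro3}: $v_p = |x_{n+1}|^{2a-1}|x|^{-\tau}u^\alpha \in L^1(B_1)$ by the standing hypothesis of Theorem \ref{mainthm3}, so $L_p(s)\to \int_{B_1} v_p\,dx$.

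The main analytic ingredient is the divergence-form identity $|x_{n+1}|^{2a-1}\widetilde\Delta_{n+1,a} w = \nabla\cdot(|x_{n+1}|^{2a-1}\nabla w)$ on $\{x_{n+1}\neq 0\}$, which rewrites $v_{i+1} = -\nabla\cdot(|x_{n+1}|^{2a-1}\nabla u_i)$. Applying the divergence theorem on the upper half of $B_1\setminus B_s$ (where $x_{n+1}^{2a-1}$ is smooth) and doubling by $x_{n+1}$-evenness---the boundary integral on $\{x_{n+1}=0\}$ vanishes because $x_{n+1}^{2a-1}=0$ there, which uses $2a-1\geq 1$---yields
\[
L_{i+1}(s) = \Phi_i(s) - \Phi_i(1),\qquad \Phi_i(r):=\int_{\partial B_r} |x_{n+1}|^{2a-1}\partial_r u_i\,dS.
\]
Under the inductive hypothesis, $\Phi_i(s) = L_{i+1}(s)+\Phi_i(1)$ stays bounded as $s\to 0$; note $\Phi_i(1)$ is a fixed constant because $u\in C^{2p}(\bar B_1\setminus\{0\})$.

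To pass from control of the flux $\Phi_i$ to control of $L_i$, I introduce the weighted spherical mean $\psi_i(r) := r^{-(n+2a-1)}\int_{\partial B_r}|x_{n+1}|^{2a-1} u_i\,dS$; a direct computation in spherical coordinates gives $\psi_i'(r) = r^{-(n+2a-1)}\Phi_i(r)$, whence
\[
\psi_i(s) = \psi_i(1) - \int_s^1 r^{-(n+2a-1)}\bigl[L_{i+1}(r)+\Phi_i(1)\bigr]dr.
\]
Since $L_i(s) = \int_s^1 \rho^{n+2a-1}\psi_i(\rho)\,d\rho$, substituting and swapping the order of integration by Fubini expresses $L_i(s)$ as a sum of three pieces: the smooth piece $\psi_i(1)(1-s^{n+2a})/(n+2a)$; a piece $-\tfrac{1}{n+2a}\int_s^1 r[L_{i+1}(r)+\Phi_i(1)]\,dr$ which converges as $s\to 0$ since the integrand is bounded on $(0,1]$; and a remainder $\tfrac{s^{n+2a}}{n+2a}\int_s^1 r^{-(n+2a-1)}[L_{i+1}(r)+\Phi_i(1)]\,dr$ of size $O\!\left(s^{n+2a}\cdot s^{-(n+2a-2)}\right) = O(s^2)$, which vanishes using $n+2a\geq 3$. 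Hence $L_i(s)$ admits a finite limit, closing the induction. The main obstacle is justifying the divergence-theorem step at $\{x_{n+1}=0\}$, where the weight is only H\"older regular; this is handled by working in the upper half and exploiting evenness, with $2a-1\geq 1$ ensuring the planar boundary integral vanishes.
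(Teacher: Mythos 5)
Your proof is correct and follows essentially the same route as the paper's. Both arguments apply the divergence theorem on annuli to identify the weighted flux $\Phi_i(r)=\int_{\partial B_r}|x_{n+1}|^{2a-1}\partial_\rho u_i\,dS$ as $L_{i+1}(r)+\text{const}$ (hence bounded by the inductive hypothesis), relate it to the derivative of the weighted spherical mean via $\psi_i'(r)=r^{-(n+2a-1)}\Phi_i(r)$ (the paper's $z_i$), and then integrate twice to bound $L_i(s)=\int_s^1\rho^{n+2a-1}\psi_i(\rho)\,d\rho$; the only cosmetic difference is that you make the Fubini swap explicit where the paper instead extracts the asymptotics $r^{n-1}\bar v_i(r)=-(n+2a-2)^{-1}\beta_i+o(1)$, which yield the same $O(\rho)$ integrand.
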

\begin{proof}
We only need to verify for $i=p-1$. Set $g(x)=|x_{n+1}|^{2a-1}|x|^{-\tau}u^\alpha$. Then $g\in L^1(B_1)$ by assumption.
\begin{equation*}
\begin{split}
&\int_{B_1\backslash B_r}|x_{n+1}|^{2a-1}(-\widetilde \Delta_{n+1,a})^p u dx=\int_{B_1\backslash B_r}g dx\\
\Rightarrow & -\int_{\partial B_1}\left(\frac{\partial v_{p-1}}{\partial\rho}-(2a-1)v_{p-1}\right) dS+\int_{\partial B_r} \left(\frac{\partial v_{p-1}}{\partial\rho}-(2a-1)r^{-1}v_{p-1}\right)dS=|g|_{L^1(B_1)}+o(1)
\\\Rightarrow & r^{n}\bar v_{p-1}'-(2a-1)r^{n-1}\bar v_{p-1}=|g|_{L^1(B_1)}+\bar v_{p-1}'(1)-(2a-1)\bar v_{p-1}(1)+o(1)=\beta_{p-1}+o(1)\\
\Rightarrow & (r^{-(2a-1)}\bar v_{p-1})'=r^{-(n+2a-1)}(\beta_{p-1}+o(1))\\
\Rightarrow & r^{n-1}\bar v_{p-1}(r)=-(n+2a-2)^{-1}\beta_{p-1}+o(1)
\end{split}
\end{equation*}
In the above equations, we always set $\beta_{p-1}=|g|_{L^1(B_1)}+\bar v_{p-1}'(1)-(2a-1)\bar v_{p-1}(1)$.
Then we get \begin{eqnarray*}
\left|\int_{B_1\backslash B_r}v_{p-1}dx\right|=\left|\int_{r}^1d\rho\int_{\partial B_\rho}v_{p-1}dS\right|=\left|\int_r^1\rho^n\bar v_{p-1}d\rho\right|<\infty
\end{eqnarray*}
For the other $v_i$, by induction and repeating the arguments of $v_{p-1}$, we can get the same conclusion.
\end{proof}

With the aid of Lemma \ref{div}, we now can prove Theorem \ref{mainthm3}.
\par {\bf The proof of Theorem \ref{mainthm3}}:
Claim:
\begin{equation}\label{401}
\int_{B_1}|x_{n+1}|^{2a-1}|x|^{-\tau}u^\alpha \varphi dx=\int_{B_1}|x_{n+1}|^{2a-1} u \varphi_pdx,
\end{equation}
 here $\varphi_p$ is defined as follows:
 $$\varphi_0=\varphi(|x|), \varphi_{i}=(-\widetilde \Delta_{n+1,a})^i\varphi_0, \varphi\in C_c^\infty(B_1),i=0,\cdots,p.$$
 Set $\eta(t)\in C^\infty(\mathbb R)$ with $\eta(t)=0,t\le 1$, $\eta(t)=1, t\geq 2$ and $\eta_{\epsilon}(x)=\eta(\frac {|x|}{\epsilon})$.
Taking $\varphi\eta_{\epsilon}$ as the test function, we will
have
\begin{eqnarray*}
&&\int\varphi\eta_{\epsilon}|x_{n+1}|^{2a-1}|x|^{-\tau}u^\alpha dx\\
&=&\int (-\widetilde\Delta_{n+1,a})^{p-1}u(x)L(\varphi\eta_{\epsilon})dx,\quad L(f)=|x_{n+1}|^{2a-1}(-\widetilde\Delta_{n+1,a}) f\\
&= &\int (-\widetilde\Delta_{n+1,a})^{p-1}u(x)(\eta_{\epsilon}L(\varphi)+\varphi L(\eta_{\epsilon})+2|x_{n+1}|^{2a-1}\nabla\varphi\nabla\eta_{\epsilon})dx\\
&=&\int v_{p-1}(\eta_\epsilon \varphi_1+\psi_1)dx, \quad \text{as } v_{p-1}=|x_{n+1}|^{2a-1}(-\widetilde \Delta_{n+1,a})^{p-1} u
\end{eqnarray*} where $\psi_1=-\varphi
\widetilde\Delta_{n+1,a}\eta_{\epsilon}-2\nabla\varphi\nabla\eta_{\epsilon}$ and $supp\psi_1\subset B_{2\epsilon}\backslash B_{\epsilon}, |\psi_1|\leq C_1\epsilon^{-2}$.
We repeat the above process to get \begin{eqnarray*}
&&\int\varphi\eta_{\epsilon}|x_{n+1}|^{2a-1}|x|^{-\tau}u^\alpha dx\\
&=&\int
v_{p-i}(\eta_\epsilon \varphi_i+\psi_i)dx\\
&=&\int
|x_{n+1}|^{2a-1}u(x)(\eta_\epsilon \varphi_p+\psi_p)dx,
\end{eqnarray*} here $\psi_{i+1}=-\widetilde\Delta_{n+1,a}\psi_{i}-\varphi_i
\widetilde\Delta_{n+1,a}\eta_{\epsilon}-2\nabla\varphi_i\nabla\eta_{\epsilon},i=1,\cdots,p-1$. By induction, it is easy to see that $$|\psi_i|\leq C_i\epsilon^{-2i},supp\psi_i\subset B_{2\epsilon}\backslash B_{\epsilon},i=1,\cdots,p.$$
Also we have
\begin{eqnarray*} && \int
u(x)|x_{n+1}|^{2a-1}|\psi_p(x)|dx\\&\leq&
C\left(\int_{B_1}|x|^{-\tau}|x_{n+1}|^{2a-1}u^\alpha\right)^{\frac
1\alpha}\left(\int
|x_{n+1}|^{2a-1}|x|^{\frac{\alpha'\tau}{\alpha}}|\psi_p|^{\alpha'}
\right)^{\frac 1{\alpha'}}\\&\leq
&C\epsilon^{\frac{2p}{\alpha}}\left(\int_{B_1}|x|^{-\tau}
|x_{n+1}|^{2a-1}u^\alpha\right)^{\frac 1\alpha}\rightarrow 0
\end{eqnarray*} as $\epsilon\rightarrow 0$. Let $\epsilon\rightarrow 0$, the claim \eqref{401} follows immediately. From the arguments of  Lemma \ref{div}, one can see that
\begin{eqnarray}\label{average}
&& r^n\bar v_{i}'-(2a-1)r^{n-1}\bar v_{i}=\beta_{i}+o(1),\nonumber\\
&& r^{n-1}\bar v_{i}(r)=-(n+2a-2)^{-1}\beta_{i}+o(1),
\end{eqnarray}where $\beta_i=\lim_{s\rightarrow 0}|v_{i+1}|_{L^1(B_1\backslash B_s)}+\bar v_{i}'(1)-(2a-1)\bar v_{i}(1)$ which is a finite number.
Integrating by parts, we can get\begin{eqnarray}\label{402}
\int_{B_1\backslash B_r}|x_{n+1}|^{2a-1}u\varphi_p dx&=&\int_{B_1\backslash B_r}\varphi |x_{n+1}|^{2a-1}(-\widetilde \Delta_{n+1,a})^p u dx\nonumber\\&-&\sum_{k=0}^{p-1}\int_{\partial B_r}\varphi_k\left(\frac{\partial v_{p-1-k}}{\partial \rho}-(2a-1)r^{-1}v_{p-1-k}\right)+v_{p-1-k}\frac{\partial\varphi_k}{\partial \rho}dS.
\end{eqnarray}
 By the definition of $\varphi_k$, we can see that $\varphi_k$ is  radially symmetric as $\varphi_0$ is radially symmetric. Thus by the average estimates \eqref{average} and Lemma \ref{div}, one can get by taking $r \rightarrow 0$ in \eqref{402},\begin{eqnarray*}
\int_{B_1}|x_{n+1}|^{2a-1}|x|^{-\tau} u^\alpha \varphi_0 dx=\int_{B_1}|x_{n+1}|^{2a-1} u \varphi_p dx+\sum_{i=0}^{p-1}\beta_i\varphi_i(0).
\end{eqnarray*} This means by \eqref{401}, $$\sum_{i=0}^{p-1}\beta_i\varphi_i(0)=0.$$
We can choose $\varphi^{(i)}_0(x)\equiv |x|^{2i},i=0,\cdots,p-1$ for $|x|\leq \frac 12 $ one by one. For $\varphi^{(0)}_0(x)$, we can easily get $\beta_0=0$. Then by induction and choosing suitable $\varphi^{(i)}_0(x)$, we get $\beta_i=0$. It is easy to see that $\beta_i=0$ is equivalent to \eqref{intromain3} holds  if we notice the definition of $\beta_i$.

\section{Nonexistence and classification of positive solutions}

Next we give a lemma without proof which is due to \cite{CMM}.
\begin{lemma}\label{lem502} If $u\in C^{2p}(\mathbb R^{n+1})$, $p\geq 1$ is
radially symmetric and satisfies the inequalities $$(-\widetilde\Delta_{n+1,a})^k u\geq 0,
\text{ in }\mathbb R^{n+1},\quad  k=0,1,\cdots,p$$ where $2p<n+2a$. Then we
have
$$(ru'+(n+2a-2p)u)'<0.$$\end{lemma}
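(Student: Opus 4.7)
The plan is to reduce the statement to a radial ODE problem and to argue by induction on $p$. Since $u$ is radial in $\mathbb{R}^{n+1}$, writing $u = u(r)$ with $r = |x|$, the operator $\widetilde\Delta_{n+1,a}$ acts on radial functions as $u''(r) + \frac{N-1}{r} u'(r)$, where $N := n+2a$; in other words, it is the standard Euclidean radial Laplacian in ``dimension'' $N$. Setting $u_k := (-\widetilde\Delta_{n+1,a})^k u \geq 0$, the relation $(r^{N-1} u_k'(r))' = -r^{N-1} u_{k+1}(r)$ together with $r^{N-1} u_k'(0) = 0$ (from $u_k \in C^2$ at the origin) yields the explicit formula $u_k'(r) = -r^{1-N}\int_0^r s^{N-1} u_{k+1}(s)\,ds \leq 0$, so that every $u_k$ is non-increasing.

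Write $f(r) := r u'(r) + (N-2p) u(r)$. Differentiating and substituting $r u''(r) = -r u_1(r) - (N-1) u'(r)$ gives the key identity
\[
f'(r) = -r u_1(r) - (2p-2)\, u'(r).
\]
Multiplying by $r^{N-1}$ and substituting the explicit formula for $u'(r)$, the claim $f'(r) < 0$ translates to $G(r) > 0$ for $r > 0$, where
\[
G(r) := r^N u_1(r) - (2p-2) \int_0^r s^{N-1} u_1(s)\, ds.
\]
Since $G(0) = 0$, it is enough to show $G'(r) > 0$. A direct differentiation yields
\[
G'(r) = r^{N-1}\bigl(r u_1'(r) + (N-2p+2)\, u_1(r)\bigr) =: r^{N-1} F(r),
\]
so the whole statement reduces to proving $F(r) > 0$ for every $r > 0$.

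I would prove $F>0$ by induction on $p$. The base case $p = 1$ is immediate: then $f'(r) = -r u_1(r) < 0$ as soon as $u_1 > 0$, and the strict positivity of $u_1$ comes from the strong maximum principle in the chain $(-\widetilde\Delta_{n+1,a}) u_k = u_{k+1}\geq 0$ combined with the non-triviality of $u$ implicit in the intended applications. For $p\geq 2$, note that $u_1$ itself is radial in $C^{2(p-1)}(\mathbb{R}^{n+1})$ and satisfies $(-\widetilde\Delta_{n+1,a})^k u_1 = u_{k+1}\geq 0$ for $k = 0, \dots, p-1$, with $2(p-1) < N$; thus the induction hypothesis applied to $u_1$ (with $p$ replaced by $p-1$) gives $F'(r) < 0$, so $F$ is strictly decreasing. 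Combining this with $F(0) = (N-2p+2)\, u_1(0) > 0$ and the identity $(r^{N-2p+2} u_1(r))' = r^{N-2p+1} F(r)$ forbids $F$ from ever becoming non-positive: if $F(r_0) \leq 0$ for some $r_0 > 0$, strict decrease gives $F(r) < 0$ uniformly on $[r_1,\infty)$ for any $r_1 > r_0$, and integration then drives $r^{N-2p+2} u_1(r) \to -\infty$, contradicting $u_1 \geq 0$. Thus $F > 0$ on $(0,\infty)$, which closes the induction and yields $f'(r) < 0$. The main obstacle is the careful book-keeping of strict positivity at each level (handled by the strong maximum principle together with non-triviality of $u$) and the no-escape argument at infinity for $F$, which crucially uses the standing hypothesis $2p < n+2a$ in the form $N-2p+1 > 0$.
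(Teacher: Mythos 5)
The paper states this lemma without proof, attributing it to \cite{CMM}, so there is no in-paper argument to compare against; I will simply assess your proof on its own terms.

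Your argument is essentially correct. The radial reduction of $\widetilde\Delta_{n+1,a}$ to the $(n+2a)$-dimensional radial Laplacian, the identity $f'(r) = -r\,u_1(r) - (2p-2)\,u'(r)$, the equivalence of $f'<0$ with $G>0$, the relation $G'(r) = r^{N-1}F(r)$, and the induction that derives $F'<0$ at level $p$ by applying the lemma at level $p-1$ to $u_1$ (which satisfies the hypotheses since $2(p-1) < n+2a$ follows from $2p<n+2a$) are all sound. The ``no-escape'' step is also fine, though you should state it precisely: once $F(r_0)\le 0$, strict monotonicity gives $F(r)\le F(r_1)<0$ for all $r\ge r_1:=r_0+1$, and integrating $\bigl(r^{N-2p+2}u_1\bigr)' = r^{N-2p+1}F$ from $r_1$ to $R$ then drives $R^{N-2p+2}u_1(R)\to-\infty$ because $N-2p+2>0$, contradicting $u_1\ge 0$.

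One caveat deserves to be made explicit rather than waved at. As literally stated the lemma is false for constant $u>0$, where $f'\equiv 0$. So the needed hypothesis is that $u$ is non-constant, equivalently $u_1=-\widetilde\Delta_{n+1,a}u\not\equiv 0$. You attribute strict positivity of $u_1$ to ``the strong maximum principle combined with the non-triviality of $u$,'' but the precise chain is: if $u_{k+1}\equiv 0$ then integrating $(r^{N-1}u_k')'=-r^{N-1}u_{k+1}=0$ with $u_k'(0)=0$ forces $u_k$ constant, and a nonzero constant $u_k=c>0$ forces $u_{k-1}(r)=u_{k-1}(0)-\tfrac{c}{2N}r^2\to-\infty$, contradicting $u_{k-1}\ge 0$; thus $u_k\equiv 0$, and descending, $u$ itself is constant. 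Conversely, if $u$ is non-constant, every $u_k$, $k=1,\dots,p$, is $\not\equiv 0$, and then the strong maximum principle (the weighted version Lemma~\ref{prelem2}, applicable after passing to the even extension) upgrades $u_1\ge 0$ superharmonic and $\not\equiv 0$ to $u_1>0$ everywhere, including $u_1(0)>0$ for your base step and for $F(0)>0$. In the paper's actual use of the lemma this is automatic, since there one has $(-\widetilde\Delta_{n+1,a})^i z^*>0$ strictly for $i=1,\dots,p$.
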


\begin{definition}
\label{har1} Let $l$ be a positive number. We say that a $C^2$ function $f$ has a harmonic asymptotic expansion at infinity in a neighborhood of infinity if:
\begin{equation}
\label{har2}\begin{split}
& f(x)=\frac1{|x|^l}\left(a_0+\sum_{i=1}^{n+1}  \frac{a_ix_i}{|x|^2}\right)+O\left(\frac 1{|x|^{l+2}}\right),
\\& f_{x_i}(x)=-la_0\frac{x_i}{|x|^{l+2}}+O\left(\frac1{|x|^{l+2}}\right),i=1,2,\cdots,n+1,\\
& f_{x_ix_j}=O\left(\frac 1{|x|^{l+2}}\right),i,j=1,\cdots,n+1,
\end{split}
\end{equation}where $a_i\in \mathbb R$ and $a_0>0$.
\end{definition}
Set $$\Sigma_\lambda=\{x\in\mathbb R^{n+1}|x_1<\lambda\},\quad x^\lambda=(2\lambda-x_1,x_2,\cdots,x_{n+1}).$$
\begin{lemma}
\label{move1}Let $f $ be a function in a neighborhood at infinity satisfying the asymptotic expansion \eqref{har2}. Then there exist $\lambda_0>0$ and $R>0$ such that if $\lambda\geq \lambda_0$,$$
f(x)>f(x^\lambda),\quad\text{for}\quad x_1<\lambda,x\notin B_R(0).$$ \end{lemma}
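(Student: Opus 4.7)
The plan is to estimate $f(x)-f(x^\lambda)$ directly by substituting the asymptotic expansion \eqref{har2}, isolating the leading positive contribution, and choosing $R$ and $\lambda_0$ large enough that this contribution dominates the corrections. The starting algebraic fact is
$$|x^\lambda|^2 - |x|^2 = (2\lambda - x_1)^2 - x_1^2 = 4\lambda(\lambda - x_1) > 0 \qquad\text{for } x\in\Sigma_\lambda,\ \lambda>0,$$
so $|x^\lambda| > |x|$ strictly. Combining this with the identity $|x^\lambda|-|x| = 4\lambda(\lambda - x_1)/(|x^\lambda|+|x|)$ and the mean value theorem applied to $t \mapsto t^{-l}$ yields the quantitative lower bound
$$\frac{a_0}{|x|^l} - \frac{a_0}{|x^\lambda|^l} \;\geq\; \frac{2la_0\,\lambda(\lambda - x_1)}{|x|\,|x^\lambda|^{l+1}}.$$

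For the full difference I would use the fundamental theorem of calculus in the $x_1$-variable,
$$f(x) - f(x^\lambda) = -\int_{x_1}^{2\lambda - x_1} f_{x_1}(s,x_2,\ldots,x_{n+1})\,ds,$$
and substitute the derivative expansion $f_{x_1}(y) = -la_0\,y_1/|y|^{l+2} + O(|y|^{-(l+2)})$ from \eqref{har2}. A direct antiderivative computation shows that the principal part integrates exactly to $a_0(|x|^{-l} - |x^\lambda|^{-l})$, while the remainder is bounded by $C\int_{x_1}^{2\lambda - x_1}(s^2 + r'^2)^{-(l+2)/2}\,ds$ with $r'^2 := \sum_{i\geq 2} x_i^2$. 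For $x_1 \geq 0$ one uses $s^2 + r'^2 \geq |x|^2$ throughout the interval of integration to bound the remainder by $2C(\lambda - x_1)/|x|^{l+2}$; for $x_1 < 0$ the full-line estimate $\int_{-\infty}^\infty (s^2 + r'^2)^{-(l+2)/2}\,ds \leq c_l/r'^{l+1}$ suffices, together with the fact that $r' \gtrsim |x|$ in the range that matters. Imposing $R \geq 2\lambda_0$ keeps $|x^\lambda|/|x|$ bounded by a universal constant on $\Sigma_\lambda \setminus B_R$, so the leading gap and the remainder scale the same way in $|x|$.

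The decisive feature is that the leading gap carries an explicit factor of $\lambda$, while the remainder does not grow with $\lambda$; hence taking $\lambda_0$ large forces the leading term to win, yielding $f(x) > f(x^\lambda)$ for all $x \in \Sigma_\lambda \setminus B_R$ and $\lambda \geq \lambda_0$. The hardest step to control is the borderline regime where $x_1$ is very close to $\lambda$, so that both the gap and the remainder shrink at comparable rates; there the single factor $\lambda$ in the numerator of the leading gap is exactly what rescues the estimate once $\lambda_0$ is taken sufficiently large. An equivalent and somewhat cleaner bookkeeping is to set $R_0(y) := f(y) - a_0|y|^{-l} - \sum_i a_i y_i |y|^{-l-2}$, note that differentiating \eqref{har2} gives $|\partial_{x_1} R_0(y)| \leq C|y|^{-(l+2)}$, and control $R_0(x) - R_0(x^\lambda)$ by the mean value theorem along the horizontal segment joining $x$ to $x^\lambda$; the concluding comparison $\lambda \gg 1$ is identical.
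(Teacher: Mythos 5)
The paper states Lemma \ref{move1} without proof (it is a standard ingredient of the moving--plane method, going back to Caffarelli--Gidas--Spruck), so your proposal has to stand on its own. The piece of it that is sound is the treatment of $x_1\ge 0$: there the segment $\{(s,x'):x_1\le s\le 2\lambda-x_1\}$ satisfies $s^2+|x'|^2\ge |x|^2\ge R^2$, the expansion for $f_{x_1}$ is valid along the whole path, the leading antiderivative gives $a_0\bigl(|x|^{-l}-|x^\lambda|^{-l}\bigr)$, and the remainder inherits the crucial factor $(\lambda-x_1)$, so the comparison with the leading gap $\gtrsim \lambda(\lambda-x_1)/|x^\lambda|^{l+2}$ reduces to choosing $\lambda_0$ large. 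That part is correct.

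The gap is in the case $x_1<0$. There the horizontal segment passes through $(0,x')$, whose norm is $r'=|x'|$, and for $x\in\Sigma_\lambda\setminus B_R$ with $x_1$ very negative and $r'$ small (e.g.\ $x=(-M,r',0,\dots)$ with $r'<R<M$) the segment exits the region where \eqref{har2} holds; the fundamental-theorem-of-calculus step is then not legitimate. The assertion ``$r'\gtrsim|x|$ in the range that matters'' is exactly the hypothesis that fails here and is not justified by anything in the argument. The other global claim, that $R\ge 2\lambda_0$ forces $|x^\lambda|/|x|$ to be bounded by a universal constant on $\Sigma_\lambda\setminus B_R$, is also false once $\lambda\gg\lambda_0$ (take $|x|=R$, $x_1=0$, and $\lambda\to\infty$; then $|x^\lambda|/|x|\to\infty$). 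The standard repair is a case split: if $|x^\lambda|>K|x|$ for a fixed $K$ with $K^{l}>4$, compare $f(x)\ge a_0/(2|x|^{l})$ with $f(x^\lambda)\le 2a_0/|x^\lambda|^{l}$ directly (no FTC needed, $R$ large suffices); if $|x^\lambda|\le K|x|$ and $x_1\ge 0$, use your FTC argument; if $|x^\lambda|\le K|x|$ and $x_1<0$, compare the pointwise expansions at $x$ and $x^\lambda$ term by term. In that last case the $O(|x|^{-(l+2)})$ errors, which carry no factor of $(\lambda-x_1)$, are nevertheless dominated because $x_1<0$ forces $\lambda-x_1>\lambda$, so the leading gap is $\gtrsim \lambda^2/|x|^{l+2}$ and swallows a constant times $|x|^{-(l+2)}$ once $\lambda_0$ is large; the $a_i$-terms are handled by the mean value estimate $|x^\lambda|-|x|\le 2\lambda(\lambda-x_1)/|x|$ together with $|x|\ge R$. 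With this three-case structure the lemma does go through, but as written your argument does not cover the sector $x_1<0$, $r'$ small.
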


\begin{lemma}
\label{move2}Let $f$ be a $C^2$ positive solution of $-\widetilde\Delta_{n+1,a}f=F(x)$ for $|x|>R$ and $f,F$ be even  functions with respect to $x_{n+1}$, where $f$ has a harmonic asymptotic expansion \eqref{har2} at infinity with $a_0>0$. Suppose that, for some positive number $\lambda_0$ and for every $(x_1,x')$ with $x_1<\lambda_0$,$$
f(x_1,x')>f(2\lambda_0-x_1,x')\quad\text{and}\quad F(x_1,x')\geq F(2\lambda_0-x_1,x').$$Then there exist $\epsilon>0,S>R$ such that
\begin{equation*}
\begin{split}
(i)\quad &f_{x_1}(x_1,x')<0\quad\text{in}\quad |x_1-\lambda_0|<\epsilon,|x|>S,\\
(ii)\quad &f(x_1,x')>f(2\lambda-x_1,x')\quad\text{in}\quad x_1<\lambda_0-\frac 12\epsilon<\lambda,|x|>S,
\end{split}
\end{equation*}for all $x\in\Sigma_{\lambda},\lambda\geq \lambda_1$ with $|\lambda_1-\lambda_0|<c_0\epsilon$, where $c_0$ is a positive number depending on $\lambda_0$ and $f$.
\end{lemma}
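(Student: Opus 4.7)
My plan combines the Hopf boundary point lemma on the hyperplane $\{x_1=\lambda_0\}$, a compactness--continuity argument for bounded $|x|$, and explicit use of the harmonic asymptotic expansion~\eqref{har2} for large $|x|$.  Throughout, set $w_\lambda(x):=f(x)-f(x^\lambda)$.  Since $\widetilde\Delta_{n+1,a}$ is invariant under the reflection $x\mapsto x^\lambda$ (only $x_1$ changes sign, and the coefficient $(2a-1)/x_{n+1}$ is untouched), one has $-\widetilde\Delta_{n+1,a}w_\lambda=F(x)-F(x^\lambda)$ in $\Sigma_\lambda\cap\{|x|,|x^\lambda|>R\}$.

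For (i), the hypothesis gives $w_{\lambda_0}>0$ in $\Sigma_{\lambda_0}$, $w_{\lambda_0}\equiv 0$ on $\{x_1=\lambda_0\}$, and $-\widetilde\Delta_{n+1,a}w_{\lambda_0}\geq 0$ in the exterior region.  The Hopf boundary point lemma, applied on interior balls tangent to $\{x_1=\lambda_0\}$ at each $(\lambda_0,x')$, yields $\partial_{x_1}w_{\lambda_0}(\lambda_0,x')<0$; by antisymmetry of $w_{\lambda_0}$ in $x_1$ about $\lambda_0$, this is $f_{x_1}(\lambda_0,x')<0$.  $C^2$ continuity then extends the strict negativity to $|x_1-\lambda_0|<\epsilon$ uniformly on compact sets in $x'$.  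For large $|x|$, I would differentiate~\eqref{har2} term by term and keep the next order explicitly:
\begin{equation*}
f_{x_1}(x)=\frac{-la_0\,x_1+a_1}{|x|^{l+2}}+O\bigl(|x|^{-l-3}\bigr),
\end{equation*}
where $a_1|x|^{-l-2}$ comes from differentiating the $\sum a_jx_j/|x|^{l+2}$ piece.  Taking $|x'|\to\infty$ in the Hopf inequality at $x_1=\lambda_0$ forces $-la_0\lambda_0+a_1\le 0$; assuming this is strict, the refined leading coefficient stays uniformly negative for $x_1$ in a small interval around $\lambda_0$ and beats the genuine $O(|x|^{-l-3})$ remainder once $|x|>S$.

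For (ii), I would split $\{x_1<\lambda_0-\epsilon/2\}$ into a bounded part $K:=\{x_1\le\lambda_0-\epsilon/2\}\cap\{|x|\le M\}$ and the asymptotic complement.  On $K$, continuity and strict positivity of $w_{\lambda_0}$ yield $w_{\lambda_0}\ge\delta>0$, while the mean value theorem gives $|w_\lambda-w_{\lambda_0}|=|f(x^{\lambda_0})-f(x^\lambda)|\le C|\lambda-\lambda_0|$ uniformly, so $w_\lambda>\delta/2>0$ on $K$ for $|\lambda-\lambda_0|$ small.  On $\{|x|>M\}$, plugging $|x^\lambda|^2=|x|^2+4\lambda(\lambda-x_1)$ into~\eqref{har2} gives
\begin{equation*}
w_\lambda(x)=\frac{2la_0\lambda(\lambda-x_1)}{|x|^{l+2}}+O\bigl(|x|^{-l-3}\bigr);
\end{equation*}
under $x_1<\lambda_0-\epsilon/2$ and $\lambda\ge\lambda_0-c_0\epsilon$ with $c_0<1/2$, the bound $\lambda-x_1>(1/2-c_0)\epsilon>0$ makes the leading term strictly positive of order $|x|^{-l-2}$, dominating the remainder for $|x|>S$ large.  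Piecing the two regimes together completes (ii).

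\textbf{Main obstacle.}  The delicate point in both parts is that the naive leading coefficient and the $O(|x|^{-l-2})$ remainder in~\eqref{har2} decay at the same rate whenever $x_1$ stays bounded, so the sign at infinity is not immediate from the expansion as written.  The remedy is to extract one more explicit term from the expansion before estimating, after which the residual error is genuinely $O(|x|^{-l-3})$ and the sign is controlled by the refined leading coefficient.  Verifying the strict sign of that refined coefficient, e.g.\ $-la_0\lambda_0+a_1<0$ in (i), by transferring the pointwise Hopf strict inequality into a limiting coefficient bound, is the single most delicate step; everything else is routine once that is secured.
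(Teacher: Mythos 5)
The paper does not actually prove Lemma~\ref{move2}; it is stated without proof, following the Caffarelli--Gidas--Spruck tradition, and your proposal is meant to reconstruct that classical argument. Your overall template — Hopf boundary point lemma on $\{x_1=\lambda_0\}$, compactness for bounded $|x|$, and the harmonic asymptotic expansion for $|x|>S$ — is the right one, and you correctly flag the central subtlety: the leading term and the remainder in \eqref{har2} decay at the same rate $|x|^{-l-2}$ when $x_1$ is bounded, so one must extract an extra explicit term. (In the paper's setting this refinement is available, since the functions in question are Kelvin transforms of $C^{2p}$ functions and by Lemma~\ref{prelem4} the expansion can genuinely be pushed to $O(|x|^{-l-3})$ for bounded $x_1$.) However, there are two concrete issues that leave the argument incomplete.

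First, the expansion you write for $w_\lambda$ in part~(ii) is wrong: you keep only the $a_0$-contribution. Working out $f(x)-f(x^\lambda)$ including the term $a_1x_1/|x|^{l+2}$, one gets, for $x_1$ bounded and $|x|$ large,
\begin{equation*}
w_\lambda(x)=\frac{2(la_0\lambda-a_1)(\lambda-x_1)}{|x|^{l+2}}+O\left(\frac{\lambda-x_1}{|x|^{l+3}}\right),
\end{equation*}
because $x^\lambda_1=2\lambda-x_1$ flips the sign of the $a_1$-piece and contributes $-2a_1(\lambda-x_1)|x|^{-l-2}$, which is the \emph{same order} as the $a_0$-piece you keep. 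Thus the sign of the leading term is governed by $la_0\lambda-a_1$, not by $la_0\lambda$ alone. This means the sign question in part~(ii) reduces to \emph{exactly} the coefficient inequality you were unable to prove in part~(i), so the two parts stand or fall together.

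Second, the step you describe as the ``single most delicate'' one — upgrading $-la_0\lambda_0+a_1\le 0$ (which you correctly obtain by sending $|x'|\to\infty$ on the plane $\{x_1=\lambda_0\}$) to the strict inequality $-la_0\lambda_0+a_1<0$ — is simply assumed, not proved. This is a genuine gap: with $-la_0\lambda_0+a_1=0$ the refined leading coefficient of $f_{x_1}$ becomes $-la_0(x_1-\lambda_0)$, which is \emph{positive} for $x_1<\lambda_0$, and the corrected leading coefficient of $w_\lambda$ becomes $2la_0(\lambda-\lambda_0)(\lambda-x_1)$, which is negative when $\lambda<\lambda_0$ — so both (i) and (ii) would fail as stated in that borderline case. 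Closing this requires an additional ingredient (a barrier/Green's-function comparison in the half-space $\Sigma_{\lambda_0}$ to force strict decay, or the observation that equality forces a symmetry of $w_{\lambda_0}$ that contradicts $w_{\lambda_0}\not\equiv 0$), which is not present in your proposal. Until that coefficient inequality is made strict, neither conclusion of the lemma follows from your estimates.
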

Now if $u(x,y)\in C^{2p}(\overline{R^{n+1}_+})$ satisfies \eqref{intromain}, we have $\tilde u(x,x_{n+1})=u(x,\frac{x_{n+1}^2}{4})$ satisfies \eqref{intromain2}. By Lemma \ref{prelem1}, one can extend $\tilde u$ to $\mathbb R^{n+1}$ by $\tilde u(x,x_{n+1})=\tilde u(x,-x_{n+1})$, $x_{n+1}<0$ such that $\tilde u(x,x_{n+1})$ still satisfies \eqref{intromain2} in $\mathbb R^{n+1}$. Define Kelvin transformation as follows
 $$u^*(x)=|x|^{-(n+2a-2p)}\tilde u\left(\frac{x}{|x|^2}\right),$$ then  by a
direct computation, $u^*$ satisfies $$(-\widetilde\Delta_{n+1,a})^p
u^*=|x|^{-\tau}(u^*)^\alpha, \quad \text{in}\quad \mathbb R^{n+1}$$ where
$\tau=n+2a+2p-\alpha(n+2a-2p)\geq 0$, see Appendix for a derivation.

Set $$u^*_{i}=(-\widetilde\Delta_{n+1,a})^{i} u^*(\frac{x}{|x|^2}).$$ Then Lemma \ref{prelem4} tells us that   $u^*_{i}$ has
the asymptotic behavior \eqref{har2}  at $\infty$.
Moreover, we have the following lemma. \begin{lemma}\label{lem503}
$$|x_{n+1}|^{2a-1}|x|^{-\tau}(u^*)^{\alpha}\in L^1(B_1).$$
\end{lemma}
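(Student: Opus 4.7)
The plan is to transfer the integrability problem at $x\to 0$ to a growth-control problem at infinity for $\tilde u$ via the Kelvin transformation, and then to close the estimate by testing the equation $(-\widetilde\Delta_{n+1,a})^p\tilde u = \tilde u^{\alpha}$ against the weighted fundamental solution $|y|^{2p-N}$ (with $N:=n+2a$), suitably cut off. First I apply the change of variables $y=x/|x|^2$, using $|x|=1/|y|$, $|x_{n+1}|=|y_{n+1}|/|y|^2$, $dx = |y|^{-2(n+1)}dy$, and the identity $\tau + \alpha(N-2p) = N+2p$; a short calculation gives
\begin{equation*}
\int_{B_1}|x_{n+1}|^{2a-1}|x|^{-\tau}(u^*)^{\alpha}\,dx \;=\; \int_{|y|>1}|y_{n+1}|^{2a-1}|y|^{2p-N}\tilde u^{\alpha}(y)\,dy,
\end{equation*}
so it suffices to bound the right-hand side.

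Next I exploit the fact that $|y|^{2p-N}$ is $p$-polyharmonic for $\widetilde\Delta_{n+1,a}$: on radial functions $\widetilde\Delta_{n+1,a}$ acts as $v''+(N-1)v'/r$, and a direct recursion yields $(-\widetilde\Delta_{n+1,a})^k|y|^{2p-N}=c_k|y|^{2p-N-2k}$ with $c_p=0$. I choose cutoffs $\xi,\chi\in C^{\infty}(\mathbb R)$ with $\xi(s)=0$ on $(-\infty,1/2]$, $\xi(s)=1$ on $[1,\infty)$, $\chi(s)=1$ on $(-\infty,1]$, $\chi(s)=0$ on $[2,\infty)$, and set $\phi_R(y):=|y|^{2p-N}\xi(|y|)\chi(|y|/R)$. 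Multiplying the equation by $|y_{n+1}|^{2a-1}\phi_R$ and integrating by parts $p$ times (using $|y_{n+1}|^{2a-1}\widetilde\Delta_{n+1,a} f = \nabla\cdot(|y_{n+1}|^{2a-1}\nabla f)$, with no boundary terms since $\phi_R$ is compactly supported away from the origin and the weight annihilates any trace at $y_{n+1}=0$) yields
\begin{equation*}
\int \phi_R\,\tilde u^{\alpha}\,|y_{n+1}|^{2a-1}\,dy \;=\; \int \tilde u\bigl((-\widetilde\Delta_{n+1,a})^p\phi_R\bigr)\,|y_{n+1}|^{2a-1}\,dy.
\end{equation*}
By polyharmonicity, the right-hand integrand is supported only in the transition annuli $\{1/2\le|y|\le 1\}$ (fixed, bounded contribution) and $\{R\le|y|\le 2R\}$. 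On the outer annulus a Leibniz expansion shows every term with $j$ derivatives on $\chi(|y|/R)$ and $2p-j$ derivatives on $|y|^{2p-N}$ is of size $R^{-j}\cdot R^{j-N}=R^{-N}$, hence $|(-\widetilde\Delta_{n+1,a})^p\phi_R|\le CR^{-N}$ there.

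Finally I bound $\int_{R\le|y|\le 2R}\tilde u\,|y_{n+1}|^{2a-1}\,dy$ using Theorem~\ref{mainthm2}. Since $(-\widetilde\Delta_{n+1,a})\tilde u>0$, the weighted spherical averaging argument of Section~3 shows that $r\mapsto r^{-(N-1)}\int_{\partial B_r}|y_{n+1}|^{2a-1}\tilde u\,dS$ is monotonically non-increasing and bounded by its limit $\tilde u(0)\int_{\mathbb S^n}|\omega_{n+1}|^{2a-1}\,dS$ at $r\to 0^+$, so
\begin{equation*}
\int_{R\le|y|\le 2R}\tilde u|y_{n+1}|^{2a-1}\,dy \;\le\; C\tilde u(0)R^{N}.
\end{equation*}
Combined with the $CR^{-N}$ pointwise bound on $(-\widetilde\Delta_{n+1,a})^p\phi_R$, this shows the outer-annulus contribution is bounded by a constant independent of $R$. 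Since $\phi_R$ is pointwise non-decreasing in $R$ and $\phi_R\tilde u^{\alpha}|y_{n+1}|^{2a-1}\ge 0$, monotone convergence applied to the left-hand side gives $\int_{|y|\ge 1}|y|^{2p-N}\tilde u^{\alpha}|y_{n+1}|^{2a-1}\,dy<\infty$, which is the required integrability. The main obstacle is the Leibniz bookkeeping in the second step: without the exact cancellation $c_p=0$, the leading term on the outer annulus would be $R^{2p-N}$ instead of $R^{-N}$, and the resulting $CR^{2p-N}\cdot R^{N}=CR^{2p}\to\infty$ would destroy the uniform bound; it is precisely the polyharmonicity of $|y|^{2p-N}$ that closes the argument.
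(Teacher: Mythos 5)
Your proof is correct and takes a genuinely different route from the paper. The paper argues by contradiction: assuming $|x_{n+1}|^{2a-1}|x|^{-\tau}(u^*)^\alpha\notin L^1(B_1)$, it derives a lower bound $z_0(r)\gtrsim r^{-(N-2p)}$ (here $N:=n+2a$) on the weighted spherical average of $u^*$ near the origin, forms the Kelvin transform $z^*(s)=s^{-(N-2p)}z_0(1/s)$ of that average, runs the blow-up machinery of the proof of Theorem~\ref{mainthm2} to show $(-\widetilde\Delta_{n+1,a})^i z^*>0$, feeds this into Lemma~\ref{lem502}, and lands on two incompatible signs for $\widetilde\Delta_{n+1,a}z_0$. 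Your argument is direct: after the change of variables $y=x/|x|^2$ it is a test-function estimate, using the polyharmonic radial function $|y|^{2p-N}$ (for which $(-\widetilde\Delta_{n+1,a})^p|y|^{2p-N}=0$ away from $0$, by the same product formula $A_{p,t}$ that appears in the Appendix), the self-adjointness of $\widetilde\Delta_{n+1,a}$ with respect to the weight $|y_{n+1}|^{2a-1}$, and the growth bound $\int_{\partial B_r}|y_{n+1}|^{2a-1}\tilde u\,dS\le C r^{N-1}$, which follows from the monotonicity of the weighted spherical average established in Section~3 since $-\widetilde\Delta_{n+1,a}\tilde u>0$ (from Theorem~\ref{mainthm2} when $p\ge 2$, or directly from the equation when $p=1$). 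Your approach is shorter, avoids the contradiction structure, and makes visible exactly where polyharmonicity of $|y|^{2p-N}$ enters; the paper's approach is more self-contained in that it recycles the iteration already set up in Section~3.

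One small inaccuracy in your closing remark, which does not affect the proof: if $c_p\ne 0$, the $j=0$ term $c_p|y|^{-N}\xi(|y|)\chi(|y|/R)$ is still of size $R^{-N}$ on the outer annulus $\{R\le|y|\le 2R\}$; it would not be of size $R^{2p-N}$ there. What actually fails without polyharmonicity is the localization to the two transition annuli: the $j=0$ term is then supported on all of $\{1\le|y|\le 2R\}$, and pairing $c_p|y|^{-N}$ against $\tilde u|y_{n+1}|^{2a-1}$ over that region gives $\int_1^{2R}r^{-N}\cdot r^{N-1}z_0(r)\,dr\sim\log R\to\infty$. So the divergence would be logarithmic, and it comes from the bulk rather than the outer annulus, but your overall point that the cancellation $c_p=0$ is essential stands.
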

\begin{proof}
If not, by
noting equation \eqref{402}, we set $r=1,r_k=r$ in \eqref{402}. Then
for $r$ small enough, there holds
$$\int_{\partial
B_{r}}
\left(\frac{\partial\left({|x_{n+1}|^{2a-1}u^*_{p-1}}\right)}{\partial
\rho}-(2a-1)\frac{|x_{n+1}|^{2a-1}u^*_{p-1}}{r}\right)dS\leq
-c_1\int_{B_1\backslash
B_{r}}|x_{n+1}|^{2a-1}|x|^{-\tau}(u^*)^{\alpha}dx.$$ Thus we can get,
$$\frac{\partial z_{p-1}}{\partial r}\geq
c_1r^{-(n+2a-1)}\int_{B_1\backslash
B_{r}}|x_{n+1}|^{2a-1}|x|^{-\tau}(u^*)^{\alpha}.$$ Or,
$$z_{p-1}\leq  -c_1r^{-(n+2a-2)},\text{ for }r\leq r_1.$$ Similarly,
$$(-1)^{i}z_{p-i}\geq c_ir^{-(n+2a-2i)},r\leq r_i,i=1,\cdots,p,$$ $z_i(r)$ is defined as $z_i(r)=\displaystyle r^{-(n+2a-1)}\int_{\partial B_r}|x_{n+1}|^{2a-1}u^*_i dS$.
Thus if $p$ is odd, this means $z_0<0$ which is a contradiction.

We only need to consider the case $p$ is even. Then
$$z_1(r)<0,\text{ for }r<r_{p-1},$$ or, $$\widetilde \Delta_{n+1,a} z_0>0.$$
This means that $z_0'(r)<0$ for $r$ small otherwise $z_0$ is
increasing for $r$ small and hence $z_0(r)\leq C$ which contradicts to $z_0\geq c r^{-(n+2a-2p)}$ for $r$ small.

Set
$$z^*(s)=s^{-(n+2a-2p)}z_0\left(\frac 1{s}\right).$$ We shall have $$(-\widetilde\Delta_{n+1,a})
^{p} z^*(s)\geq (z^*)^\alpha(s).$$ By the same arguments as in
the proof of Theorem \ref{mainthm2}, we have $$(-\widetilde\Delta_{n+1,a})^i
z^*(s)>0,i=1,...,p.$$ By Lemma \ref{lem502}, this means$$
(s (z^*)'(s)+(n+2a-2p)z^*(s))'<0.$$ Or,
$$s(z^*)''(s)+(n+2a+1-2p)z^*(s)<0.$$
Now an easy computation yields that $$ \widetilde\Delta_{n+1,a}
z_0(r)=r^{-(n+2a+4-2p)}\left((z^*)''(s)+\frac{n+2a-2p+1}{s}
(z^*)'(s) \right)+\frac{2p-2}{r}z'_{0}(r)<0,r=\frac 1s,$$
which yields a contradiction.

\end{proof}

\begin{lemma}\label{superhar}
$u^*_i>0,\quad\text{in}\quad \mathbb R^{n+1}\backslash\{0\}$.
\end{lemma}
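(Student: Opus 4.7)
The plan is to prove $u^*_i>0$ on $\mathbb R^{n+1}\setminus\{0\}$ for each $i=0,1,\ldots,p-1$ by descending induction on $i$. The endpoints need no work: $u^*_0=u^*$ is the Kelvin transform of the strictly positive $\tilde u$, while $u^*_p=|x|^{-\tau}(u^*)^\alpha\ge 0$ follows directly from \eqref{pre4}. So the content is the inductive step that turns $u^*_{i+1}>0$ into $u^*_i>0$ for $i=p-1,\ldots,1$.

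At each step I would line up three structural ingredients. Super-harmonicity: $-\widetilde\Delta_{n+1,a}u^*_i=u^*_{i+1}\ge 0$ makes $u^*_i$ weighted-super-harmonic on $\mathbb R^{n+1}\setminus\{0\}$. Positivity at infinity: Lemma \ref{prelem4} gives $u^*_i(x)=c_iu(0)\,|x|^{-(n+2a-2p+2i)}(1+o(1))\to 0^+$ as $|x|\to\infty$. Average control at the origin: Lemma \ref{lem503} supplies $|x_{n+1}|^{2a-1}|x|^{-\tau}(u^*)^\alpha\in L^1(B_1)$, which is precisely the hypothesis of Theorem \ref{mainthm3}; that theorem forces $\beta_i=0$, and the chain of identities \eqref{average} translates this into $\bar v_i(r)=o(r^{-(n-1)})$, equivalently the weighted spherical mean $\bar u^*_{i,w}(r)=o(r^{-(n+2a-2)})$ as $r\to 0$.

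I would then argue by contradiction: suppose $u^*_i(x_0)\le 0$ for some $x_0\ne 0$. Since $u^*_i\to 0^+$ at infinity, $\inf u^*_i\le 0$. Lemma \ref{prelem2} rules out interior attainment on the connected domain $\mathbb R^{n+1}\setminus\{0\}$ (otherwise $u^*_i\equiv\mathrm{const}$, incompatible with the nonzero decay). So any minimizing sequence must satisfy $x_k\to 0$. At this point I would invoke the punctured-ball maximum principle Lemma \ref{lem005} for $u^*_i$ on $B_R\setminus\{0\}$ with $R$ large: provided its hypothesis $\underset{x\to 0}{\underline{\lim}}\,|x|^{n+2a-2}u^*_i(x)\ge 0$ is in force, we conclude $u^*_i\ge\inf_{\partial B_R}u^*_i$ throughout $B_R\setminus\{0\}$; sending $R\to\infty$ together with the infinity asymptotics from Lemma \ref{prelem4} yields $u^*_i\ge 0$ globally, and strict positivity follows from one final application of Lemma \ref{prelem2}.

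The principal obstacle is to upgrade the weighted-average control $\bar u^*_{i,w}(r)=o(r^{-(n+2a-2)})$ to the pointwise $\underline{\lim}$ bound demanded by Lemma \ref{lem005}: the identity $\beta_i=0$ only controls a spherical integral, while what is needed is a genuine pointwise lower bound on $|x|^{n+2a-2}u^*_i(x)$. The cleanest route I foresee is to adapt the Kelvin/ODE machinery of the proof of Lemma \ref{lem503}: introduce $z^*_i(s)=s^{-(n+2a-2p+2i)}z_i(1/s)$ with $z_i(r)=r^{-(2a-1)}\bar v_i(r)$, check that a failure of the $\underline{\lim}$ bound transplants into an unbounded weighted-super-harmonic growth for $z^*_i$ at infinity, and derive a contradiction either from Lemma \ref{lem502} (through the monotonicity $(s(z^*_i)'(s)+(n+2a-2p+2i)z^*_i(s))'<0$ obtained after rescaling) or from the iterative power-growth scheme used in the proof of Theorem \ref{mainthm2}. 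This step is technical but essentially parallel to the machinery already developed in Section 3 and inside the proof of Lemma \ref{lem503}.
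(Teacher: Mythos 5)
Your overall frame (descending induction, argue by contradiction from $u^*_i(x_0)\le 0$, invoke the asymptotics from Lemma \ref{prelem4} and the integrability from Lemma \ref{lem503}) shares ingredients with the paper, but the crux of your argument hinges on Lemma \ref{lem005}, which requires the pointwise hypothesis
\[
\underset{x\rightarrow 0}{\underline{\lim}}\,|x|^{n+2a-2}u^*_i(x)\geq 0,
\]
and this is exactly where your proposal breaks. You correctly identify the obstruction yourself: Theorem \ref{mainthm3} and the expansion \eqref{average} only give $\beta_i=0$, i.e.\ control of the \emph{weighted spherical mean} $\bar v_i(r)=o(r^{1-n})$. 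That is an $L^1$-on-spheres statement, and a failure of the pointwise $\underline{\lim}$ bound along some sequence $x_k\to 0$ need not be visible in $\bar v_i(r)$ at all. Your proposed repair — transplanting by Kelvin inversion and running the ODE scheme on $z_i^*$ — only ever manipulates spherical averages $z_i(r)$; it cannot manufacture the pointwise lower bound that Lemma \ref{lem005} demands. So the gap is genuine, and the route you sketch to close it does not close it. There is also a small side issue in your first paragraph: the lemma is stated for $i=1,\dots,p-1$, and $u^*_p$ is not part of the claim; $u^*_0=u^*>0$ is indeed immediate, but ``$u^*_p\ge 0$'' is not one of the ``endpoints'' of the induction.

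The paper avoids the pointwise-vs-average difficulty entirely. After assuming $u^*_{p-1}(x_0)<0$ (with $x_0$ normalized so its $(n+1)$-th coordinate vanishes, by the same dimensional-lift trick as in Section 3), it works only with the weighted spherical averages $w(r,x_0)=r^{-(n+2a-1)}\int_{\partial B_r(x_0)}v_{p-1}\,dS$. The superharmonicity of $u^*_{p-1}$ makes $w(\cdot,x_0)$ decreasing, so $w(r,x_0)<0$ for $r\in[0,|x_0|)$, and integrating — using Lemma \ref{div} and Theorem \ref{mainthm3} to control the contribution of a small ball $B_\epsilon(0)$ — yields $\int_{B_\rho(x_0)}v_{p-1}\,dx<0$ for $\rho=|x_0|+\epsilon$. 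On the other hand, sweeping $B_\rho(x_0)$ by the one-parameter family of spheres $\partial B_{t|x_0|+\epsilon}(tx_0)$, each of which encloses the origin, and using positivity of $u^*_{p-1}$ at infinity together with monotonicity of $w(\cdot,x)$, one gets $\int_{B_\rho(x_0)}v_{p-1}\,dx>0$. This contradiction never requires anything pointwise near $0$; only the integrated identities from Theorem \ref{mainthm3} and Lemma \ref{div} are used. You should replace your Lemma \ref{lem005}-based step with this ball-sweeping integral argument, or else supply a genuinely pointwise estimate near the origin (which would be substantially harder and is not provided by the machinery you cite).
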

\begin{proof}
We need only to verify $u^*_{p-1}>0$, the proofs for the other $u^*_i$ are the same. If not, we can find $x_0\in\mathbb R^{n+1}\backslash\{0\}$ such that $u^*_{p-1}(x_0)=-c_0<0$. As the same arguments in the proof of Theorem \ref{mainthm2} in Section 3, we may assume that the $n+1$-th coordinate of $x_0$ is $0$. Set $v_i(x)=|x_{n+1}|^{2a-1}u^*_i(x)$. It is easy to see that $$\lim_{r\rightarrow 0}\frac1{r^{n+2a-1}}\int_{\partial B_r(x_0)}v_{p-1}(x)dS<0.$$
 Following the computation in Section 3 yields that $$\frac{d}{dr}\left[\frac1{r^{n+2a-1}}\int_{\partial B_r}v_{p-1}(x)dS\right]=\frac1{r^{n+2a-1}}\int_{\partial B_r}\left[\frac{\partial v_{p-1}}{\partial r}-(2a-1)r^{-1}v_{p-1}\right]dS<0.$$ Set $w(r,x_0)=\frac1{r^{n+2a-1}}\int_{\partial B_r(x_0)}v_{p-1}(x)dS$. Then we can see $w(r,x_0)<0,$ for $r\in [0,|x_0|)$. Therefore, by
Lemma \ref{div} and Theorem \ref{mainthm3}, we can choose $\epsilon$ small enough such that $$\int_{B_\rho(x_0)} v_{p-1}(x)dx<0,\rho=|x_0|+\epsilon.$$ Noting that we have $u^*_{p-1}>0$ near infinity, then we have for any $ x\in \mathbb R^{n+1}$
$$\frac1{r^{n+2a-1}}\int_{\partial B_r(x)}v_{p-1}dS>0,\text{ for }r>|x|.$$
Now we have \begin{eqnarray*}
\int_{B_\rho(x_0)}v_{p-1}(x)dx=\int_{B_{\epsilon}(0)}v_{p-1}dx+\int_{0}^1dt
\int_{\partial B_{t|x_0|+\epsilon}(tx_0)}v_{p-1}(x)dx>0.
\end{eqnarray*}This yields a contradiction.
\end{proof}

Now we are in a position to prove Theorem \ref{mainthm1}.
\par {\bf The proof of Theorem \ref{mainthm1}}:
\par Set $w_{\lambda}(x)=u^*(x)-u^*(x^\lambda)$ in $\Sigma_\lambda$.
From Lemma \ref{superhar} we get $u^*_i>0$. So by Lemma \ref{move1}  and Lemma \ref{lem005}, one gets
$u^*_{i}(x)\rightarrow 0$ as $x\rightarrow \infty$ to get
$$(-\widetilde\Delta_{n+1,a})^{p-i}w_{\lambda}>0,i=1,\cdots,p.$$ for all $\lambda\geq \lambda'>>1$. Set $$
\lambda_0=\inf\{\lambda>0\big|(-\widetilde\Delta_{n+1,a})^{p-i}w_{\mu}(x)>0\text{ in
}\Sigma_{\mu}\text{ for }\mu\geq \lambda,i=1,\cdots,p\}.$$
We may assume $\lambda_0>0$ and by the definition of $\lambda$, we see that \begin{equation*}
(-\widetilde\Delta_{n+1,a})^i w_{\lambda_0}(x)\geq 0,\quad i=0,\cdots,p-1,\end{equation*}
By Lemma \ref{prelem2}, we have either $$(-\widetilde\Delta_{n+1,a})^i w_{\lambda_0}(x)= 0,\quad i=0,\cdots,p-1.$$ Or $$(-\widetilde\Delta_{n+1,a})^i w_{\lambda_0}(x)> 0,\quad i=0,\cdots,p-1.$$ We need to prove the first situation is true. If not, there exist $\lambda_n\uparrow \lambda_0$ and $i_0\in\{1,\cdots,p-1\}$ such that $$\inf_{x\in \mathbb R^2}(-\widetilde\Delta_{n+1,a})^{i_0} w_{\lambda_n}(x)=\inf_{x\in B_R\backslash B_r}(-\widetilde\Delta_{n+1,a})^{i_0} w_{\lambda_n}(x)=(-\widetilde\Delta_{n+1,a})^{i_0} w_{\lambda_n}(x^{\lambda_n})<0$$ for some fixed $R$ large and $r$ small. This is a  conclusion from  Lemma \ref{move2}, Lemma \ref{superhar} and Lemma \ref{prelem2}.
There are two cases we should distinguish with:
\begin{itemize}
\item[(1)] $\displaystyle\lim_{n\rightarrow\infty} x^{\lambda_n}=x^0\in \Sigma_{\lambda}$. Then $$(-\widetilde\Delta_{n+1,a})^{i_0}w_{\lambda_0}(x^0)=\lim_{n\rightarrow\infty}(-\widetilde\Delta_{n+1,a})^{i_0}
    w_{\lambda_n}(x^{\lambda_n})\leq 0$$ which is a contradiction to $(-\widetilde\Delta_{n+1,a})^{i_0}w_{\lambda_0}(x)>0$.
\item[(2)] $\displaystyle\lim_{n\rightarrow\infty} x^{\lambda_n}=x^0\in \partial\Sigma_{\lambda}$. Then $$\partial_{x_1}(-\widetilde\Delta_{n+1,a})^{i_0}w_{\lambda_0}(x^0)=\lim_{n\rightarrow\infty}\partial_{x_1}
    (-\widetilde\Delta_{n+1,a})^{i_0}
    w_{\lambda_n}(x^{\lambda_n})= 0$$
    which is a contradiction to $\partial_{x_1}(-\widetilde\Delta_{n+1,a})^{i_0}w_{\lambda_0}(x^0)<0$.
\end{itemize}
If $\alpha<\frac{n+2a+2p}{n+2a-2p}$, we have $\tau>0$. To prove the radial symmetry of $u^*(x)$, one should take a
transformation. Set
$$\tilde{u}^*(x',x_{n+1},x_{n+2})=u^*(x',\sqrt{x_{n+1}^2+x_{n+2}^2}).$$
It follows that, \begin{equation}\label{301}
(-\Delta_{n+2,a-\frac 12})^p\tilde u^*=
|x|^{-\tau}(\tilde u^*)^\alpha,
\text{ in }\mathbb R^{n+2},\partial_{n+2}\tilde u^*(x',x_{n+1},0)=0,
\end{equation} here $\displaystyle \widetilde \Delta_{n+2,a-\frac 12}=\sum_{i=1}^{n+2}\partial_{x_i}^2+\frac{2a-2}{x_{n+2}}\partial_{x_{n+2}}$.
There is a
singularity at $0$, and hence $\lambda_0$ must be $0$. Notice that
 \eqref{301}  is rotationally invariant about $x', x_{n+1}$. For $|x'|^2+x_{n+1}^2=|\bar{x}'|^2+\bar{x}_{n+1}^2$, we have
$$u^*(x',x_{n+1})=\tilde u^*(x',x_{n+1},0)=\tilde  u^*(\bar{x}',\bar x_{n+1},0)=u^*(\bar x',\bar x_{n+1}).$$ This implies that $$\tilde {u}(x',x_{n+1})=\tilde u(\bar{x}',\bar x_{n+1}),\text{ if }|x'|^2+x_{n+1}^2=|\bar{x}'|^2+\bar{x}_{n+1}^2.$$
 If we take another transformation such
as
$$u^*_b(x)=\frac 1{|x|^{n+2a-2}}\tilde {u}_b\left(\frac{x}{|x|^2}\right),\text{ here
}b_{n+1}=0,$$  where $\tilde {u}_b(x)=\tilde {u}(x-b)$. Repeating the above arguments, similarly we have
$$\tilde u(x',x_{n+1})=\tilde u(\bar{x}',\bar x_{n+1}), \text{ if }|x'+b'|^2+x_{n+1}^2=|\bar{x}'+b'|^2+\bar x_{n+1}^2.$$ In fact, $b'$ can
be chosen arbitrarily, thus $\tilde u$ must be a constant. This means that $\tilde u\equiv 0$.

Now we consider the case $\alpha=\frac{n+2a+2}{n+2a-2}$ or $\tau=0$.
By the same arguments as we did in the case $\tau>0$,
there exists $\lambda=(\lambda_1,...,\lambda_{n+1})$ such that
\begin{equation}\label{109}
\tilde u^*(x',x_{n+1},0)=u^*(x',x_{n+1})=u^*(\bar{x}',\bar{x}_{n+1})
=\tilde u^*(\bar{x}',\bar{x}_{n+1},0),
\end{equation} if
 $\displaystyle \sum_{i=1}^{n+1}|x_i-\lambda_i|^2=
\sum_{i=1}^{n+1}|\bar{x}'_i-\lambda_i|^2$. In fact, $\lambda_{n+1}$ must be 0. Otherwise, it follows that $$
u^*(x',2\lambda_{n+1}-x_{n+1})=u^*(x',x_{n+1})=u^*(x',-x_{n+1}).$$ It shows
that for the fixed $x'$, $u^*$ is periodic with respect to $x_{n+1}$
with period $2\lambda_{n+1}$. This means that $u^*$ must vanish which
is impossible. For $\lambda'=(\lambda_1,...,\lambda_n)$, we have two
cases.\begin{itemize}
\item[(1)] $\lambda'=0$:  since
$\tilde u (x)=\frac{1}{|x|^{n+2a-2p}}u^*(\frac{x}{|x|^2})$, $\tilde{u}(x)$
is radially symmetric with respect to the origin.
\item[(2)] $\lambda'\neq 0$: This means that $0$ is not the
symmetric center of $u^*$, $u^*$ must be $C^2$ at $0$. In other words,
$\tilde{u}(x)$ has  similar asymptotic behaviors at $\infty$ as
$u^*(x)$. This allows us to apply the  moving plane method to
$\tilde {u}(x)$ directly to obtain that $\tilde {u}(x)$ is radially
symmetric with respect to some point $b\in \mathbb R^{n+1},b_{n+1}=0$.
\end{itemize}
The above arguments show  that $\tilde u(x)$ is radially symmetric with
respect to some point $b\in \{b_{n+1}=0\}$. Now we can follow the
arguments of Section 3 in \cite{ChenLiOu} and use the conformal invariant property to classify the solutions. This completes the proof of Theorem
\ref{mainthm1}.

\section{Appendix}
In the Appendix, we will prove \eqref{pre4}. We borrow the ideas from \cite{AxlerBourdonWade} and \cite{Pavlovic}. As polynomials are dense in $C^{2k}(B_1)$ and the operator $\widetilde \Delta_{n+1,a}$ is linear and local, we only need to show \eqref{pre4} is true for all homogeneous polynomials which are even functions with respect to $x_{n+1}$. At first, we need a decomposition for polynomials in $\mathbb R^{n+1}$. Denote the set of  all the polynomials which are even with respect to $x_{n+1}$ by $\mathscr{\widetilde{P}}$. Set
\begin{equation*}
\begin{split}
&\mathscr{\widetilde{P}}_m=\{p\in\mathscr{\widetilde{P}}|p \text{  is a homogeneous polynomials with order } m\},\\
&\mathscr{\widetilde{H}}_m=\{p\in\mathscr{\widetilde{P}}_m|\widetilde\Delta_{n+1,a} p=0\}.
\end{split}
\end{equation*}
\begin{lemma}\label{lemapp1}
If $p\in \mathscr{\widetilde{P}}$ with $\deg(p)=m$, then there exists some polynomial $q\in\mathscr{\widetilde{P}}$ with $\deg(q)\leq m-2$ satisfying that
$$\widetilde\Delta_{n+1,a}((1-|x|^2)q+p)=0.$$
\end{lemma}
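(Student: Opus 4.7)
I plan to reduce the lemma to a linear-algebra statement and then invoke the strong maximum principle of Section~2. For $k\geq 0$ denote by $\widetilde{\mathscr P}_{\leq k}$ the subspace of polynomials in $\widetilde{\mathscr P}$ of total degree at most $k$, and consider the linear endomorphism
$$
\Phi:\widetilde{\mathscr P}_{\leq m-2}\longrightarrow \widetilde{\mathscr P}_{\leq m-2},\qquad \Phi(q)=\widetilde\Delta_{n+1,a}\bigl((1-|x|^2)q\bigr).
$$
That $\Phi$ is well defined uses two observations: $\widetilde\Delta_{n+1,a}$ lowers total degree by two, and it preserves $\widetilde{\mathscr P}$ because $\partial_{n+1}f$ is odd in $x_{n+1}$ whenever $f\in\widetilde{\mathscr P}$, so $x_{n+1}^{-1}\partial_{n+1}f$ is a genuine polynomial even in $x_{n+1}$. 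A direct Leibniz computation using $\widetilde\Delta_{n+1,a}(|x|^2)=2(n+2a)$ yields the explicit form
$$
\Phi(q)=(1-|x|^2)\widetilde\Delta_{n+1,a}q-2(n+2a)q-4\,x\cdot\nabla q.
$$

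I would next show $\Phi$ is injective; since its domain and codomain are finite-dimensional of the same dimension, this upgrades $\Phi$ to an isomorphism. Assume $\Phi(q)=0$ and set $u:=(1-|x|^2)q$: then $\widetilde\Delta_{n+1,a}u=0$ in $\mathbb R^{n+1}$, and since $u$ is even in $x_{n+1}$ one has $\partial_{n+1}u(x',0)=0$; moreover $u\equiv 0$ on $\partial B_1$. Applying Lemma~\ref{prelem2} to both $u$ and $-u$ on $B_1$ forces $u\equiv 0$ in $\bar B_1$, and since $u$ is a polynomial this extends to $u\equiv 0$ on $\mathbb R^{n+1}$, whence $q\equiv 0$.

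With $\Phi$ an isomorphism the lemma follows at once: given $p\in\widetilde{\mathscr P}$ with $\deg(p)=m$, the polynomial $-\widetilde\Delta_{n+1,a}p$ lies in $\widetilde{\mathscr P}_{\leq m-2}$, so by surjectivity of $\Phi$ there exists $q\in\widetilde{\mathscr P}_{\leq m-2}$ with $\Phi(q)=-\widetilde\Delta_{n+1,a}p$, i.e.\ $\widetilde\Delta_{n+1,a}((1-|x|^2)q+p)=0$. The main obstacle is the injectivity of $\Phi$; the decisive input making it work is the degenerate Neumann-type strong maximum principle of Lemma~\ref{prelem2}, which is precisely adapted to functions that are even in $x_{n+1}$ and vanish on the unit sphere, so that the factor $(1-|x|^2)$ is the geometrically natural choice here.
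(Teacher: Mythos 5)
Your proof is correct and follows essentially the same approach as the paper: define the linear map $q\mapsto\widetilde\Delta_{n+1,a}((1-|x|^2)q)$ on the finite-dimensional space $\widetilde{\mathscr P}_{\leq m-2}$, use the degenerate strong maximum principle (Lemma~\ref{prelem2}) applied to $(1-|x|^2)q$ to prove injectivity, and invoke finite-dimensionality to get surjectivity. The only differences are cosmetic: you supply a couple of details the paper leaves implicit (the explicit Leibniz expansion, the verification that $\widetilde\Delta_{n+1,a}$ preserves evenness in $x_{n+1}$ and lowers degree, and the explicit two-sided application of the minimum principle to $u$ and $-u$), which are welcome but do not change the argument.
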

\begin{proof}
We now define an operator $T: W\rightarrow W$  by
$$T(q)=\widetilde\Delta_{n+1,a}((1-|x|^2)q),\quad q\in W$$
where $W$ is the set of all polynomials belong to $\mathscr{\widetilde{P}}$ with order less or equal to $m-2$. First we show $T$ is injective. If $T(q)=0$, this means that $(1-|x|^2)q$ solves
\begin{equation*}
\widetilde\Delta_{n+1,a}((1-|x|^2)q)=0,\text{ in }B_1, \quad (1-|x|^2)q=0,\text{ on }\partial B_1.
\end{equation*}
By  Lemma \ref{prelem2}, we must have $q=0$ which implies $T$ is injective. Note that $W$ is a finite dimension vector space. This means $T$ is also surjective. Hence we have for any $p\in \mathscr{\widetilde{P}}$ with $\deg(p)=m$, there exists $q\in W$ such that
$$\widetilde\Delta_{n+1,a}((1-|x|^2)q)=-\widetilde\Delta_{n+1,a}p.$$
\end{proof}
By Lemma \ref{lemapp1}, we have for any $p\in \mathscr{\widetilde{P}}_m$ there exists $q\in \mathscr{\widetilde{P}}$ with $\deg(q)\leq m-2$ such that
\begin{equation}\label{app1}
p=h+|x|^2q-q
\end{equation}where $h\in \mathscr{\widetilde{P}}$ and $\widetilde\Delta_{n+1,a} h=0$. Also from the decomposition we know $\deg(h)\leq m$. Taking the homogeneous part of degree $m$ at both sides, we get
$$p=p_m+|x|^2q_{m-2},p_m\in \mathscr{\widetilde{H}}_m, q_{m-2}\in \mathscr{\widetilde{P}}_{m-2}.$$
Repeating the above decomposition, we get for $p \in \mathscr{\widetilde{P}}_m$,
\begin{equation*}
p=p_m+|x|^2p_{m-2}+|x|^4p_{m-4}+\cdots, p_{j}\in \mathscr{\widetilde{H}}_j.
\end{equation*}
The summation of the above decomposition is finite. By such a decomposition, we only need to show \eqref{pre4} is true for $u(x)=|x|^{t-k} h(x)=|x|^t h(\frac{x}{|x|})$ where $t\in R$ and $h\in \mathscr{\widetilde{H}}_k$ for some $k$.
\begin{equation*}
\widetilde\Delta_{n+1,a} u(x)=[t(t+n+2a-2)-k(k+n+2a-2)]|x|^{t-2} h\left(\frac{x}{|x|}\right)
\end{equation*}if we note $ x\cdot\nabla h=kh$. Applying the above identity for $m$ times, we get
$$(\widetilde\Delta_{n+1,a})^{m} u(x)=A_{m,t}|x|^{t-2m}h\left(\frac{x}{|x|}\right)$$
where $$A_{m,t}=\prod_{j=0}^{m-1}[(t-2j)(t-2j+n+2a-2)-k(k+n+2a-2)].$$
As for $|x|^{2m-n-2a-t}h(\frac{x}{|x|})$, we have
$$(\widetilde\Delta_{n+1,a})^m \left(|x|^{2m-n-2a-t}h\left(\frac{x}{|x|}\right)\right)=B_{m,t}|x|^{-n-2a-t}h\left(\frac{x}{|x|}\right)$$ where
$$B_{m,t}=\prod_{j=0}^{m-1}[(2m-n-2a-t-2j)(2m-t-2j-2)-k(k+n+2a-2)].$$
It is easy to see that $A_{m,t}=B_{m,t}$ and this proves \eqref{pre4}.

\end{document}